\newcommand{\ii}{{\bf{i}}}
\newcommand{\jj}{{\bf{j}}}
\newcommand{\kk}{{\bf{k}}}
\begin{document}

\title{Improved power methods for computing eigenvalues of dual quaternion Hermitian matrices\thanks{This work was supported by the National Natural Science Foundation of China (12171271).}
}



\author{Yongjun Chen  \and Liping~Zhang}


\institute{Yongjun Chen\at
            Department of Mathematical Sciences, Tsinghua University, Beijing 100084, China\\
              yongjun-24@mails.tsinghua.edu.cn
           \and
            Liping Zhang, Corresponding author\at
            Department of Mathematical Sciences, Tsinghua University, Beijing 100084, China\\
            lipingzhang@tsinghua.edu.cn
}

\date{Received: date / Accepted: date}

\maketitle

\begin{abstract}
This paper investigates the eigenvalue computation problem of the dual quaternion Hermitian matrix closely related to multi-agent group control. Recently, power method was proposed by Cui and Qi in {\it Journal of Scientific Computing, 100 (2024)} to solve such problem. Recognizing that the convergence rate of power method is slow due to its dependence on the eigenvalue distribution, we propose two improved versions of power method based on dual complex adjoint matrices and Aitken extrapolation, named  DCAM-PM and ADCAM-PM. They achieve notable efficiency improvements and demonstrate significantly faster convergence. However, power method may be invalid for dual quaternion Hermitian matrices with eigenvalues having identical standard parts but distinct dual parts. To overcome this disadvantage, utilizing the eigen-decomposition properties of dual complex adjoint matrix, we propose a novel algorithm EDDCAM-EA which surpasses the power method in both accuracy and speed.
Application to eigenvalue computations of dual quaternion Hermitian matrices in multi-agent formation control and numerical experiments highlight the remarkable accuracy and speed of our proposed algorithms.

\keywords{dual quaternion Hermitian matrix \and Dual complex adjoint matrix \and Eigenvalue \and  Power method \and Aitken extrapolation \and Multi-agent formation control}
\subclass{15A18 \and 15A66 \and 65F15}
\end{abstract}

\section{Introduction}
Dual quaternion matrices play a pivotal role in robotics research, addressing fundamental problems such as hand-eye calibration \cite{Daniilidis1999}, simultaneous localization and mapping  \cite{qc1,c1,Cadena2016,Grisetti2010}, and kinematic modeling \cite{Qi2023}. Among these, dual quaternion Hermitian matrices, a crucial subclass, exhibit remarkable properties essential for analyzing multi-agent formation control system \cite{Qi2023} and solving pose graph optimization problem \cite{Cadena2016,Cui2023,Grisetti2010}.

Recent advancements have significantly expanded the spectral theory of dual quaternion matrices \cite{Duan2023,d2,d3,Li2023,wang2023dual,wei2024singular}. Qi and Luo \cite{Li2023} established that an $n$-by-$n$ dual-quaternion Hermitian matrix has exactly $n$ eigenvalues, all of which are dual numbers, and provided the eigenvalue decomposition for such matrices. Ling et al. \cite{d3} extended the minimax principle and Fan-Hoffman inequality to dual quaternion matrices. Moreover, von Neumann-type trace inequalities and Hoffman-Wielandt-type inequalities for dual quaternion Hermitian matrices were developed in \cite{ling2022neumann}.

To compute strict dominant eigenvalues of dual quaternion Hermitian matrices, Cui and Qi \cite{Cui2023} introduced a power method with linear convergence. A strict dominant eigenvalue, characterized by its standard part exceeding that of other eigenvalues, was efficiently computed using this method. However, its computational cost escalates with matrix dimension, and the method becomes ineffective when eigenvalues have identical standard parts but differ in their dual parts. These limitations underscore the need for more efficient and robust algorithms for eigenvalue computation in dual quaternion Hermitian matrices.

Inspired by the role of complex adjoint matrices in quaternion matrix theory \cite{Zhang1997}, Chen et al. \cite{chen2024dual} proposed the dual complex adjoint matrix, a novel construct tailored for dual quaternion matrices. They established a ring isomorphism between the dual quaternion matrix ring and the dual complex adjoint matrix ring, enabling the definition and uniqueness proof of representative right eigenvalues. Leveraging these properties, a direct solution to the hand-eye calibration problem and a Rayleigh quotient iteration method were introduced, demonstrating the versatility of dual complex adjoint matrices.

This paper aims to deepen the understanding of the relationship between dual complex adjoint matrices and dual quaternion matrices in the context of eigenvalue problems. Furthermore, we seek to develop more efficient algorithms for computing the eigenvalues of dual quaternion Hermitian matrices, addressing the limitations of existing methods.

The main contributions of this paper are summarized as follows.
\begin{itemize}
\item[(i)] We establish a crucial connection between the eigenvalue problems of dual quaternion matrices and dual complex adjoint matrices. This transformation simplifies the computation of eigenpairs for dual quaternion matrices into an equivalent problem for dual complex matrices. Building on this insight, we refine the power method and introduce a novel variant, DCAM-PM, which significantly enhances computational efficiency.
This improvement is confirmed in Table \ref{table1}.

\item[(ii)]  Since the convergence rate of the power method is influenced by the distribution of eigenvalues, we accelerate its convergence by integrating Aitken extrapolation into DCAM-PM, resulting in an enhanced algorithm, ADCAM-PM. We demonstrate that ADCAM-PM achieves a faster convergence rate compared to the power method. This improvement is validated in Figure \ref{fig:testfig} and Table \ref{table0}.

\item[(iii)] 

To overcome the limitations of the power method, which fails when solving for eigenvalues of dual quaternion Hermitian matrices with eigenvalues sharing identical standard parts but differing dual parts, we fully leverage the properties of dual complex adjoint matrices and propose EDDCAM-EA, a method capable of comprehensively computing all eigenpairs of dual quaternion Hermitian matrices. This method significantly outperforms the power method (including our improved variants, DCAM-PM and ADCAM-PM,) in terms of both accuracy and speed, while effectively addressing its limitations. Validation is provided in Table \ref{table1} and Subsection \ref{subsection6.3}.
\end{itemize}


This paper is organized as follows. Section \ref{Preliminaries} provides an overview of key results related to dual quaternion matrices. Section \ref{section3} establishes the relationship between dual complex adjoint matrices and dual quaternion matrices in the context of eigenvalue problems. Section \ref{section5} introduces DCAM-PM, an improved power method based on the dual complex adjoint matrix, and further refines it via Aitken extrapolation to derive ADCAM-PM, an accelerated variant with superior convergence. Section \ref{section4} presents EDDCAM-EA, a novel approach for computing all eigenpairs of dual quaternion Hermitian matrices through eigen-decomposition of dual complex adjoint matrices. Section \ref{section6} presents numerical experiments that address the eigenvalue computation problem of dual quaternion Hermitian matrices, including its application in Multi-Agent Formation Control, demonstrating the efficacy of the proposed methods. Finally, concluding remarks are provided in Section \ref{section7}.

\section{Preliminary}\label{Preliminaries}

\subsection{Dual complex numbers and dual quaternions}

We denote $\mathbb{R}$ and $\mathbb{C}$ as sets of real numbers and complex numbers, respectively. Denote the set of dual complex numbers as
\begin{displaymath}
   \mathbb{DC}=\{{a}=a_{st}+a_{\mathcal I}\varepsilon|a_{st},a_{\mathcal I}\in \mathbb{C}\},
  \end{displaymath}
where $\varepsilon$ is an infinitesimal unit that satisfies $\varepsilon\neq 0$ and $\varepsilon^{2}=0$.  We call $a_{st}$ and $a_{\mathcal I}$ the standard and dual parts of $a$, respectively. The infinitesimal unit $\varepsilon$ is commutative in the multiplication with complex numbers and quaternions. We state that $a$ is {\it appreciable} if $a_{st}\neq 0$. The {\it conjugate} \cite{Qi2022} of $a$ is
${a}^{\ast }=a_{st}^{\ast}+a_{\mathcal I}^{\ast }\varepsilon,$ where $a_{st}^{\ast}$ and $a_{\mathcal I}^{\ast}$ are the conjugates of $a_{st}$ and $a_{\mathcal I}$, respectively. The {\it absolute value} \cite{Qi2022} of ${a}$ is
 \begin{displaymath}\left |{a}\right |=\begin{cases}
\left | a_{st}\right |+{\rm sgn}( a_{st}) a_{\mathcal I}\varepsilon, & {\rm if} \quad a_{st}\neq 0, \\
\left | a_{\mathcal I} \right |\varepsilon, & \text{\rm otherwise.}
\end{cases}
\end{displaymath}
If $a_{st},a_{\mathcal I}\in \mathbb{R}$, ${a}$ is called a {\it dual number}. Denote the set of dual numbers as
\begin{displaymath}
   \mathbb{D}=\{{a}=a_{st}+a_{\mathcal I}\varepsilon|a_{st},a_{\mathcal I}\in \mathbb{R}\}.
  \end{displaymath}
The order operation for dual numbers was defined in \cite{Cui2023}. For two dual numbers ${a}=a_{st}+a_{\mathcal I}\varepsilon$ and ${b}=b_{st}+b_{\mathcal I}\varepsilon$, we say that ${a} > {b}$ if
$$\text{$a_{st} > b_{st}$ ~~~{\rm or}~~~ $a_{st}=b_{st}$ {\rm and} $a_{\mathcal I} > b_{\mathcal I}$}.$$
When $b_{st}\ne 0$ or $a_{st}=b_{st}=0$ and $b_{\mathcal I}\ne 0$, we can define the division operation \cite{Qi2022} of dual numbers as
$$\frac{{a}}{{b}}=\begin{cases}
\dfrac{ a_{st}}{b_{st}}+\left (\dfrac{ a_{\mathcal I}}{b_{st}}-\dfrac{ a_{st}b_{\mathcal I}}{b_{st}b_{st}} \right )\varepsilon, & {\rm if} \quad b_{st}\neq 0, \\
\dfrac{ a_{\mathcal I}}{b_{\mathcal I}}+c\varepsilon, & {\rm if} \quad a_{st}=b_{st}=0,~b_{\mathcal I}\ne 0
\end{cases}
$$
where $c$ denotes an arbitrary complex number.

For two dual complex numbers ${a}=a_{st}+a_{\mathcal I}\varepsilon, {b}=b_{st}+b_{\mathcal I}\varepsilon\in \mathbb{DC}$, addition and multiplication of $a$ and $b$ are defined as
  \begin{displaymath}
  \begin{aligned}
  &{a}+{b}={b}+{a}=\left (a_{st}+b_{st} \right )+\left (a_{\mathcal I}+b_{\mathcal I} \right )\varepsilon,\\
&{a}{b}={b}{a}=a_{st}b_{st} +\left (a_{st}b_{\mathcal I}+a_{\mathcal I}b_{st} \right )\varepsilon.
\end{aligned}
\end{displaymath}

The limit operation for dual number sequences was defined in \cite{Qi2022}.
For a dual number sequence $\left \{a_{k}=a_{k,st}+a_{k,\mathcal I}\varepsilon | k=1,2,\ldots \right \}$, we say that this sequence converges to a limit $a=a_{st}+a_{\mathcal I}\varepsilon$ if
\begin{equation*}
\underset{k\rightarrow \infty }{\lim} a_{k,st}=a_{st}\quad {\rm and}\quad \underset{k\rightarrow \infty }{\lim} a_{k,\mathcal I}=a_{\mathcal I}.
\end{equation*}

Similar to the equivalent infinity symbol $O$ of real numbers, notation for the equivalent infinity of dual numbers was introduced in \cite{Cui2023}.
For a dual number sequence $\left \{a_{k}=a_{k,st}+a_{k,\mathcal I}\varepsilon | k=1,2,\ldots \right \}$ and a real number sequence $\{b_k|k=1,2,\ldots\}\subset\mathbb{R}$, we denote $a_k=O_D(b_k)$ if
$a_{k,st}=O(b_k)$ and $a_{k,\mathcal{I}}=O(b_k)$. Furthermore, if there exist a constant $0<c<1$ and a polynomial $h(k)$, such that
$a_k=O_D(c^kh(k))$, then we write $a_k=\tilde{O}_D(c^k)$.

Denote the set of quaternions as
\begin{displaymath}
 \mathbb{Q}=\{\tilde{q}=q_{0}+q_{1} \ii+q_{2} \jj+q_{3} \kk|~q_{0},q_{1},q_{2},q_{3}\in\mathbb{R}\},
  \end{displaymath}
 where $\ii, \jj, \kk$ are three imaginary units of quaternions, satisfying
 \begin{displaymath}
  \begin{aligned}
  &\ii\jj=-\jj\ii=\kk,\quad \jj\kk=-\kk\jj=\ii,\quad \kk\ii=-\ii\kk=\jj,\\
  &\ii^2=\jj^2= \kk^2=-1.
 \end{aligned}
\end{displaymath}
A quaternion $\tilde{q}\in\mathbb{Q}$ can also be represented as $\tilde{q}=\left [q_{0}, q_{1},q_{2},q_{3}\right ]$, which is a real four-dimensional vector. We can rewrite $\tilde{q}=[ q_{0},\vec{q} ]$, where $\vec{q}$ is a real three-dimensional vector \cite{Cheng2016,Daniilidis1999,Wei2013}. The conjugate of $\tilde{p}$ is $\tilde{p}^{\ast}=\left [p_{0}, -\vec{p}\right ]$ and the magnitude of $\tilde{p}$ is
$\left |\tilde{p}\right|=\sqrt{p_{0}^{2}+\left \| \vec{p} \right \|_2^2}$.

The zero element in $\mathbb{Q}$ is $\tilde{0}=\left [0,0,0,0\right ]$ and the unit element is $\tilde{1}=\left [1,0,0,0\right ]$. For any two quaternions $\tilde{p}=\left [p_{0}, \vec{p}\right ]$ and $\tilde{q}=\left [q_{0}, \vec{q}\right ]$, the addition and multiplicity of $\tilde{q}$ and $\tilde{p}$ are defined as
 \begin{displaymath}
\tilde{p}+\tilde{q}=\tilde{q}+\tilde{p}=\left [p_{0}+q_{0},\vec{p}+\vec{q}\right ],\quad
\tilde{p}\tilde{q}=\left [p_{0}q_{0}-\vec{p}\cdot \vec{q}, p_{0}\vec{q}+q_{0}\vec{p}+\vec{p}\times \vec{q}\right ].
\end{displaymath}
The multiplication of quaternions satisfies the distributive law, but is noncommutative. If $\tilde{q}\tilde{p}=\tilde{p}\tilde{q}=\tilde{1}$, the quaternion $\tilde{p}$ is said to be invertible and its inverse is $\tilde{p}^{-1}=\tilde{q}$.

We denote the set of unit quaternions by $\mathbb{U}=\{\tilde{p}\in \mathbb{Q}|~|\tilde{p} |=1\}$. For any two unit quaternion $\tilde{p},\tilde{q}\in \mathbb{U}$, we have $\tilde{p}\tilde{q}\in \mathbb{U}$ and $\tilde{p}^{\ast }\tilde{p}=\tilde{p}\tilde{p}^{\ast }=\tilde{1}$; that is, $\tilde{p}$ is invertible, and $\tilde{p}^{-1}=\tilde{p}^{\ast }$. In general, if a quaternion $\tilde{p}\ne \tilde{0}$, then we have  $\frac{\tilde{p}^{\ast }}{|\tilde{p}|^2}\tilde{p}=\tilde{p}\frac{\tilde{p}^{\ast }}{|\tilde{p}|^2}=\tilde{1}$; hence, $\tilde{p}^{-1}=\frac{\tilde{p}^{\ast }}{|\tilde{p}|^2}$.

Denote the set of dual quaternions as
 \begin{displaymath}
 \hat{\mathbb{Q}}=\{\hat{p}=\tilde{p}_{st}+\tilde{p}_{\mathcal I}\varepsilon|~\tilde{p}_{st}, \tilde{p}_{\mathcal I} \in \mathbb{Q}\}.
\end{displaymath}
We call $\tilde{p}_{st}$ and $\tilde{p}_{\mathcal I}$ the standard part and dual part of $\hat{p}$, respectively. If $\tilde{p}_{st}\neq \tilde{0}$, then we say that $\hat{p}$ is {\it appreciable}.
The conjugate of $\hat{p}$ is
$\hat{p}^{\ast }=\tilde{p}_{st}^{\ast}+\tilde{p}_{\mathcal I}^{\ast }\varepsilon$ and the magnitude \cite{Qi2022} of $\hat{p}$ is
\begin{equation*}
\left | \hat{p}\right |=\begin{cases}
\left | \tilde{p}_{st}\right |+\frac{{\rm sc}(\tilde{p}_{st}^{\ast }\tilde{p}_{\mathcal I})}{\left |\tilde{p}_{st} \right |}\varepsilon, & \text{if} \quad \tilde{p}_{st}\neq \tilde{0}, \\
\left | \tilde{p}_{\mathcal I} \right |\varepsilon, & \text{if} \quad \tilde{p}_{st}= \tilde{0},
\end{cases}
\end{equation*}
where ${\rm sc}(\tilde{p})=\frac{1}{2}(\tilde{p}+\tilde{p}^{\ast})$.
We say that $\hat{p}$ is a {\it unit dual quaternion } if $\left |\hat{p} \right |=1$.

The zero element in $\hat{\mathbb{Q}}$ is $\hat{0}=\tilde{0}+\tilde{0}\varepsilon$ and the unit element is $\hat{1}=\tilde{1}+\tilde{0}\varepsilon$. For two dual quaternions $\hat{p}=\tilde{p}_{st}+\tilde{p}_{\mathcal I}\varepsilon, \hat{q}=\tilde{q}_{st}+\tilde{q}_{\mathcal I}\varepsilon$, addition and multiplicity of $\hat{p}$ and $\hat{q}$ are
\begin{equation*}
\hat{p}+\hat{q}=\hat{q}+\hat{p}=\left (\tilde{p}_{st}+\tilde{q}_{st} \right )+\left (\tilde{p}_{\mathcal I}+\tilde{q}_{\mathcal I} \right )\varepsilon,\qquad
\hat{p}\hat{q}=\tilde{p}_{st}\tilde{q}_{st} +\left (\tilde{p}_{st}\tilde{q}_{\mathcal I}+\tilde{p}_{\mathcal I}\tilde{q}_{st} \right )\varepsilon.
\end{equation*}
If $\tilde{q}_{st}\ne \tilde{0}$ or  $\tilde{p}_{st}=\tilde{q}_{st}=\tilde{0}$ and $\tilde{q}_{\mathcal I}\ne\tilde{0}$, the division \cite{Cui2023} of $\hat{p}$ and $\hat{q}$ is defined as
\begin{equation*}
\frac{ \tilde{p}_{st}+ \tilde{p}_{\mathcal I}\varepsilon}{\tilde{q}_{st}+ \tilde{q}_{\mathcal I}\varepsilon}=\begin{cases}
\dfrac{ \tilde{p}_{st}}{\tilde{q}_{st}}+\left (\dfrac{ \tilde{p}_{\mathcal I}}{\tilde{q}_{st}}-\dfrac{ \tilde{p}_{st}\tilde{q}_{\mathcal I}}{\tilde{q}_{st}\tilde{q}_{st}} \right )\varepsilon, & \text{if} \quad \tilde{q}_{st}\neq \tilde{0}, \\
\dfrac{ \tilde{p}_{\mathcal I}}{\tilde{q}_{\mathcal I}}+\tilde{c}\varepsilon, & \text{if} \quad \tilde{p}_{st}=\tilde{q}_{st}=\tilde{0},~\tilde{q}_{\mathcal I}\ne\tilde{0}
\end{cases}
\end{equation*}
where $\tilde{c}\in \mathbb{Q}$ denotes an arbitrary quaternion.  If $\hat{q}\hat{p}=\hat{p}\hat{q}=\hat{1}$, then $\hat{p}$ is called invertible and the inverse of $\hat{p}$ is $\hat{p}^{-1}=\hat{q}$.

Denote the set of unit dual quaternions as $\hat{\mathbb{U}}=\{\hat{p}\in\hat{\mathbb{Q}}|~|\hat{p}|=1\}$. Let $\hat{p}\in \hat{\mathbb{U}}$, then $\hat{p}^{\ast }\hat{p}=\hat{p}\hat{p}^{\ast }=\hat{1}$. Hence, $\hat{p}$ is invertible and $\hat{p}^{-1}=\hat{p}^{\ast}$. Generally, if $\hat{p}=\tilde{p}_{st}+\tilde{p}_{\mathcal I}\varepsilon\in\hat{\mathbb{Q}}$ is appreciable, then $\hat{p}^{-1}=\tilde{p}^{-1}_{st}-\tilde{p}^{-1}_{st}\tilde{p}_{\mathcal I}\tilde{p}^{-1}_{st}\varepsilon$.

A unit dual quaternion $\hat{u}$ is called the projection of $\hat{q}\in \hat{\mathbb Q}$ onto the set $\hat{\mathbb{U}}$, if
\begin{equation*}
\hat{u}\in \underset{\hat{v}\in\hat{\mathbb{U}}}{\arg\min} ~
|\hat{v}-\hat{q}|^2.
\end{equation*}
A feasible solution was given in \cite{Cui2023}. Denote $\hat{q}=\tilde{q}_{st}+\tilde{q}_{\mathcal I}\varepsilon$. If $\tilde{q}_{st}\neq \tilde{0}$, let
\begin{equation*}
\hat{u}=\frac{\hat{q}}{\left | \hat{q}\right |}=\frac{\tilde{q}_{st}}{\left | \tilde{q}_{st}\right |}+\left ( \frac{\tilde{q}_{\mathcal I}}{\left | \tilde{q}_{st}\right |}-\frac{\tilde{q}_{st}}{\left | \tilde{q}_{st}\right |}sc\left( \frac{\tilde{q}_{st}^{\ast }}{\left | \tilde{q}_{st}\right |}\frac{\tilde{q}_{\mathcal I}}{\left | \tilde{q}_{st}\right |}\right)\right )\varepsilon.
\end{equation*}
If $\tilde{q}_{st}=\tilde{0}$ and $\tilde{q}_{\mathcal I}\neq \tilde{0}$, let
\begin{equation*}
\hat{u}=\frac{\tilde{q}_{\mathcal{I}}}{\left | \tilde{q}_{\mathcal{I}}\right |}+\tilde{c}\varepsilon \in \hat{\mathbb{U}},
\end{equation*}
where $\tilde{c}$ denotes an arbitrary quaternion that satisfies $sc(\tilde{q}_{\mathcal{I}}^{\ast }\tilde{c})=\tilde{0}$.
Then $\hat{u}$ is the projection of $\hat{q}$ onto the set $\hat{\mathbb{U}}$.

\subsection{Dual quaternion matrix}

The sets of $n\times m$-dimensional dual number matrices, dual complex matrices, quaternion matrices, and dual quaternion matrices are denoted as $\mathbb{D}^{n\times m}$,
$\mathbb{DC}^{n\times m}$, $\mathbb{Q}^{n\times m}$, and$\hat{\mathbb{Q}}^{n\times m}$, respectively.
Their zero elements are ${O}^{n\times m}$, $\hat{O}^{n\times m}$, $\tilde{\mathbf{O}}^{n\times m}$ and  $\hat{\mathbf{O}}^{n\times m}$.
Denote $\hat{I}_n$, $\tilde{\mathbf{I}}_n$ and $\hat{\mathbf{I}}_n$ as the unit element of $\mathbb{DC}^{n\times n}$, $\mathbb{Q}^{n\times n}$, and $\hat{\mathbb{Q}}^{n\times n}$, respectively.

A dual complex matrix $\hat{P}=P_{st}+P_{\mathcal I}\varepsilon \in \mathbb{DC}^{n\times m}$ has standard part $P_{st}\in \mathbb C^{n\times m}$ and dual part $P_{\mathcal I}\in \mathbb C^{n\times m}$. The transpose and conjugate of $\hat{P}=(\hat{p}_{ij})$ are $\hat{P}^{T}=(\hat{p}_{ji})$ and $\hat{P}^{\ast}=(\hat{p}_{ji}^{\ast })$, respectively. If $\hat{P}^{\ast}=\hat{P}$, then $\hat{P}$ is a dual complex Hermitian matrix. A dual complex matrix $\hat{U}\in \mathbb{DC}^{n\times n}$ is a unitary matrix, if $\hat{U}^\ast\hat{U}=\hat{U}\hat{U}^\ast=\hat{I}_n$.

The 2-norm of dual complex vector $\hat{\mathbf{\mathbf x}}=\mathbf x_{st}+\mathbf x_{\mathcal I}\varepsilon\in \mathbb{DC}^{n\times 1}$ is defined as
\begin{equation*}
\left \| \hat{\mathbf x }\right \|_{2}=\begin{cases}
\sqrt{\hat{\mathbf x }^*\hat{\mathbf x }}, & \text{if} \quad \mathbf x_{st}\neq O, \\
\|\mathbf{x}_{\mathcal{I}}\|_2\varepsilon, & \text{if} \quad \mathbf x_{st}=O.
\end{cases}
\end{equation*}

The following defines the eigenvalues and eigenvectors of dual complex matrices.
\begin{definition}
Let $\hat{P}\in \mathbb{DC}^{n\times n}$. If there exist $\hat{\lambda}\in \mathbb{DC}$ and $\hat{\mathbf x}\in \mathbb{DC}^{n\times 1}$ such that $\hat{\mathbf x}$ is appreciable and
\begin{equation*}
    \hat{P}\hat{\mathbf x}=\hat{\mathbf x}\hat{\lambda},
\end{equation*}
then $\hat{\lambda}$ and $\hat{\mathbf x}$ are eigenvalue and corresponding eigenvector of $\hat{P}$.
\end{definition}

Specifically, the dual complex Hermitian matrix with dimension $n$ has exactly $n$ dual number eigenvalues.

Suppose that $\hat{P}=P_1+P_2\varepsilon \in \mathbb{DC}^{n\times m}$,
the $F$-norm and the $F^*$-norm of $\hat{P}$ are defined by
\begin{equation*}
\| \hat{P}\|_{F}=\begin{cases}
\left \| P_1\right \|_{F}+\frac{sc(tr (P_1^{\ast }P_2))}{\left \| P_1 \right \|_{F}}\varepsilon, & \text{if} \quad P_1\neq O, \\
\left \| P_2\right \|_{F}\varepsilon, & \text{if} \quad P_1=O,
\end{cases}
\end{equation*}
where $tr(P)$ is the sum of the diagonal elements of $P$, and
\begin{equation*}
\| \hat{P} \|_{F^*}=\sqrt{\left \| P_1\right \| _{F}^2+ \left \| P_2\right \| _{F}^2\varepsilon}.
\end{equation*}

A quaternion matrix $\tilde{\mathbf Q}\in \mathbb Q^{m\times n}$ can be
expressed as $\tilde{\mathbf Q}= Q_1+ Q_2 \ii+ Q_3
\jj+ Q_4 \kk$ with $ Q_1, Q_2, Q_3, Q_4\in \mathbb{R}^{m\times n}$.
Let $P_1= Q_1+Q_2 \ii$ and $P_2= Q_3+ Q_4 \ii$, then $\tilde{\mathbf Q}$ can be rewritten as
$\tilde{\mathbf Q}= P_1+ P_2 \jj$.
The $F$-norm of a quaternion matrix $\tilde{\mathbf{Q}}=(\tilde{q}_{ij}) \in \tilde{\mathbb{Q}}^{m\times n}$ is defined as
$\| \tilde{\mathbf{Q}}\| _F=\sqrt{\sum_{ij}^{} |\tilde{q}_{ij}|^2}$.
The magnitude of a quaternion vector $\tilde{\mathbf{x}}=(\tilde{x}_{i}) \in \tilde{\mathbb{Q}}^{n\times 1}$ is defined as
$\left \| \tilde{\mathbf{x}} \right \|=\sqrt{\sum_{i=1}^{n}|\tilde{x}_{i}|^2}.$

A dual quaternion matrix $\hat{\mathbf Q}=\tilde{\mathbf Q}_{st}+\tilde{\mathbf Q}_{\mathcal I}\varepsilon \in \hat{\mathbb{Q}}^{m\times n}$ has standard part  $\tilde{\mathbf Q}_{st}\in \mathbb Q^{m\times n}$ and dual part $\tilde{\mathbf Q}_{\mathcal I}\in \mathbb Q^{m\times n}$. If $\tilde{\mathbf Q}_{st}\neq \tilde{\mathbf O}$, then $\hat{\mathbf Q}$ is called appreciable. The transpose and conjugate of $\hat{\mathbf{Q}}=(\hat{q}_{ij})$ are
$\hat{\mathbf{Q}}^{T}=(\hat{q}_{ji})$ and $\hat{\mathbf{Q}}^{\ast}=(\hat{q}_{ji}^{\ast })$, respectively.
Denote the set of unitary dual quaternion matrix with dimension $n$ as
$$
\hat{\mathbb{U}}^n_2=\{\hat{\mathbf{U}}\in \hat{\mathbb{Q}}^{n\times n}|~\hat{\mathbf{U}}^\ast\hat{\mathbf{U}}=\hat{\mathbf{U}}\hat{\mathbf{U}}^\ast=\hat{\mathbf{I}}_n\}.
$$
Denote the set of dual quaternion Hermitian matrix with dimension $n$ as
$$
\hat{\mathbb{H}}^n=\{\hat{\mathbf{Q}}\in \hat{\mathbb{Q}}^{n\times n} |~\hat{\mathbf{Q}}^{\ast}=\hat{\mathbf{Q}}\}.
$$

The $2$-norm of a dual quaternion vector  $\hat{\mathbf{x}}=\tilde{\mathbf{x}}_{st}+\tilde{\mathbf{x}}_{\mathcal{I}}\varepsilon \in \hat{\mathbb{Q}}^{n\times 1}$ is defined as
\begin{equation*}
\| \hat{\mathbf x}\|_{2}=\begin{cases}
\sqrt{\hat{\mathbf x}^*\hat{\mathbf x}}, & \text{if} \quad \tilde{\mathbf x }_{st}\neq \tilde{\mathbf O }, \\
\|\tilde{\mathbf{x}}_{\mathcal{I}}\|\varepsilon, & \text{if} \quad \tilde{\mathbf x }_{st}=\tilde{\mathbf O },
\end{cases}
\end{equation*}
and denote $\hat{\mathbb{Q}}_2^{n\times 1}=\{\hat{\mathbf{x}}\in \hat{\mathbb{Q}}^{n\times 1}|~\| \hat{\mathbf x}\|_{2}=1\}$.
The $2^{R}$-norm of the dual quaternion vector $\hat{\mathbf{x}}$ is
\begin{equation*}
\| \hat{\mathbf x}\|_{2^{R}}=\sqrt{\| \tilde{\mathbf x}_{st}  \|^{2}+\| \tilde{\mathbf x }_{\mathcal I} \|^{2}}.
\end{equation*}

The $F$-norm and $F^{R}$-norm of a dual quaternion matrix $\hat{\mathbf Q}=\tilde{\mathbf Q}_{st}+\tilde{\mathbf Q}_{\mathcal I}\varepsilon$ are defined by
\begin{equation*}
\| \hat{\mathbf Q} \|_{F}=\begin{cases}
\| \tilde{\mathbf Q}_{st} \|_{F}+\frac{sc (tr(\tilde{\mathbf Q}_{st}^{\ast }\tilde{\mathbf Q}_{\mathcal I}))}{\|  \tilde{\mathbf Q}_{st} \|_{F}}\varepsilon, & \text{if} \quad \tilde{\mathbf Q}_{st}\neq \tilde{\mathbf O }, \\
 \| \tilde{\mathbf Q}_{\mathcal I} \|_{F}\varepsilon, & \text{if} \quad \tilde{\mathbf Q}_{st}=\tilde{\mathbf O },
\end{cases}
\end{equation*}
and
\begin{equation*}
 \| \hat{\mathbf Q} \|_{F^{R}}=\sqrt{ \| \tilde{\mathbf Q}_{st} \|_{F}^{2}+\| \tilde{\mathbf Q}_{\mathcal I}\|_{F}^{2}}.
\end{equation*}
For details on these definitions, refer to \cite{Cui2023,Qi2022}.

A unit quaternion vector $\hat{\mathbf u}$ is called the projection of a dual quaternion vector $\hat{\mathbf x}$ onto the set of dual quaternion vector with unit $2$-norm, i.e., $\hat{\mathbb{Q}}_2^{n\times 1}$, if
\begin{equation*}
\hat{\mathbf{u}}\in\underset{\hat{\mathbf{v}}\in\hat{\mathbb{Q}}_2^{n\times1}}{\operatorname*{\arg\min}}~\|\hat{\mathbf{v}}-\hat{\mathbf{x}}\|_2^2.
\end{equation*}
A feasible solution was given in \cite{Cui2023}.
If $\tilde{\mathbf{x}}_{st}\neq \tilde{\mathbf{O}}^{n\times 1}$, denote $$\hat{\mathbf u}=\frac{\hat{\mathbf x}}{\left \| \hat{\mathbf x}\right \|_2}=\frac{\tilde{\mathbf x}_{st}}{\left \| \tilde{\mathbf x}_{st}\right \|}+\left ( \frac{\tilde{\mathbf x}_{\mathcal I}}{\left \| \tilde{\mathbf x}_{st}\right \|}-\frac{\tilde{\mathbf x}_{st}}{\left \| \tilde{\mathbf x}_{st}\right \|}sc\left( \frac{\tilde{\mathbf x}_{st}^{\ast }}{\left \| \tilde{\mathbf x}_{st}\right \|}\frac{\tilde{\mathbf x}_{\mathcal I}}{\left \| \tilde{\mathbf x}_{st}\right \|}\right) \right )\varepsilon.
$$ If $\tilde{\mathbf x}_{st}= \tilde{\mathbf{O}}^{n\times 1}$ and $\tilde{\mathbf x}_{\mathcal I}\neq \tilde{\mathbf{O}}^{n\times 1}$, denote
$$\hat{\mathbf u}=\frac{\tilde{\mathbf x}_{\mathcal{I}}}{\left \| \tilde{\mathbf x}_{\mathcal{I}}\right \|}+\tilde{\mathbf c}\varepsilon \in \hat{\mathbb{Q}}_2^{n\times1},$$ where $\tilde{\mathbf c}$ is arbitrary queternion vector satisfying
$sc(\tilde{\mathbf x}_{\mathcal{I}}^{\ast }\tilde{\mathbf c})=\tilde{0}$.
Then $\hat{\mathbf u}$ is the projection of $\hat{\mathbf x}$ onto the set
$\hat{\mathbb{Q}}_2^{n\times 1}$.

The following definition of eigenvalue and eigenvector of dual quaternion matrices was given in \cite{Li2023}.
\begin{definition}\label{def:righteig}
Let $\hat{\mathbf Q}\in \hat{\mathbb{Q}}^{n\times n}$.
If there exist $\hat{\lambda} \in \hat{\mathbb{Q}}$ and $\hat{\mathbf x}\in \hat{\mathbb{Q}}^{n\times 1}$ such that $\hat{\mathbf x}$ is appreciable and
\begin{equation*}
    \hat{\mathbf Q}\hat{\mathbf x}=\hat{\mathbf x}\hat{\lambda},
\end{equation*}
then $\hat{\lambda}$ is called a {\bf right eigenvalue} of $\hat{\mathbf Q}$ with $\hat{\mathbf x}$ as the corresponding {\bf right eigenvector}.

If there exist $\hat{\lambda} \in \hat{\mathbb{Q}}$ and $\hat{\mathbf x}\in \hat{\mathbb{Q}}^{n\times 1}$, where $\hat{\mathbf x}$ is appreciable, such that
\begin{equation*}
    \hat{\mathbf Q}\hat{\mathbf x}=\hat{\lambda}\hat{\mathbf x},
\end{equation*}
then we call $\hat{\lambda}$ is a {\bf left eigenvalue} of $\hat{\mathbf Q}$ with $\hat{\mathbf x}$ as an associated {\bf left eigenvector}.
\end{definition}

Let $\hat{\lambda}$ be a right eigenvalue of $\hat{\mathbf Q}$ with $\hat{\mathbf x}$ as the corresponding right eigenvector. Since a dual number is commutative with a dual quaternion vector, then if $\hat{\lambda}$ is a dual number, it is also a left eigenvalue of $\hat{\mathbf{Q}}$. In this case, we simply call $\hat{\lambda}$ an {\bf eigenvalue} of $\hat{\mathbf{Q}}$ with $\hat{\mathbf{x}}$ as an associated {\bf eigenvector}.

A dual quaternion Hermitian matrix $\hat{\mathbf{Q}}\in\hat{\mathbb{H}}^n$ has exactly $n$ eigenvalues, which are all dual numbers \cite[Theorem4.2]{Li2023}. Similar to Hermitian matrices, $\hat{\mathbf{Q}}$ also exhibits a unitary decomposition. There exist a unitary dual quaternion matrix $\hat{\mathbf{U}}\in \hat{\mathbb{U}}^{n}_2$ and a diagonal dual number matrix $\hat{\Sigma} \in \mathbb D^{n\times n}$ such that  $\hat{\mathbf{Q}}=\hat{\mathbf{U}}^*\hat{\Sigma}\hat{\mathbf{U}}$ \cite{Li2023}.

Suppose that $\{\hat{\lambda}_i=\lambda_{i,st}+\lambda_{i,\mathcal I}\}_{i=1}^{n}$ are eigenvalues of $\hat{\mathbf{Q}}\in\hat{\mathbb{H}}^n$, and $\hat{\lambda}_1$ is called the strict dominant eigenvalue \cite{Cui2023} of $\hat{\mathbf{Q}}$ with multiplicity $l$ if $$\hat{\lambda}_1=\hat{\lambda}_2=\cdots=\hat{\lambda}_l,~
|\lambda_{1,st}|>|\lambda_{l+1,st}|\ge\cdots\ge|\lambda_{n,st}|.$$

\section{Dual Complex Adjoint Matrix}\label{section3}
In this section, we introduce the dual complex adjoint matrix and establish the connection between the eigenvalue problems of dual
quaternion matrices and dual complex adjoint matrices.

We now introduce the {\it dual complex adjoint matrix} \cite{chen2024dual} of a dual quaternion matrix $\hat{\mathbf Q}=(A_1+A_2 \jj)+( A_3+ A_4 \jj)\varepsilon$, defined via the mapping $\mathcal {J}$,
\begin{equation*}\label{JJ}
\mathcal{J}(\hat{\mathbf Q})=\begin{bmatrix}
 A_1 & A_2 \\
-\overline{A_2}   & \overline{ A_1}
\end{bmatrix}
+\begin{bmatrix}
 A_3 &  A_4 \\
-\overline{ A_4}   & \overline{A_3}
\end{bmatrix} \varepsilon,
\end{equation*}
which is a bijection from the set $\hat{\mathbb{Q}}^{m\times n}$ to the set $DM(\mathbb{C}^{m\times n})$, where
\begin{equation*}
DM(\mathbb{C}^{m\times n})=
\left.\left\{\mathcal{J}(\hat{\mathbf Q})=\begin{bmatrix}
A_1 &  A_2 \\
-\overline{ A_2}   & \overline{ A_1}
\end{bmatrix}
+\begin{bmatrix}
 A_3 & A_4 \\
-\overline{A_4}   & \overline{ A_3}
\end{bmatrix} \varepsilon \right| A_1, A_2, A_3, A_4\in\mathbb{C}^{m\times n}\right\}
\end{equation*}
is the set of dual complex adjoint matrices with dimension $2m \times 2n$. Specifically, $DM(\mathbb{C}^{m \times n})$ is a subset of $\mathbb {DC}^{2m \times 2n}$.

The following lemma lists some useful properties of bijection $\mathcal{J}$, see \cite[Lemma 2]{chen2024dual}.

\begin{lemma}\label{lemma3.2}
Let $\hat{\mathbf P},\hat{\mathbf P}_1\in \hat{\mathbb{Q}}^{m\times k}$, $\hat{\mathbf Q} \in \hat{\mathbb{Q}}^{k\times n}$, $\hat{\mathbf R}\in \hat{\mathbb{Q}}^{n\times n}$, then
\begin{itemize}
\item[{\rm (i)}]  $\mathcal{J}(\hat{\mathbf O}^{m\times n})=\hat{O}^{2m\times 2n},\mathcal{J}(\hat{\mathbf I}_n)=\hat{I}_{2n}$.
\item[{\rm (ii)}]  $\mathcal{J}(\hat{\mathbf P}\hat{\mathbf Q})=\mathcal{J}(\hat{\mathbf P})\mathcal{J}(\hat{\mathbf Q})$.
\item[{\rm (iii)}]  $\mathcal{J}(\hat{\mathbf P}+\hat{\mathbf P}_1)=\mathcal{J}(\hat{\mathbf P})+\mathcal{J}(\hat{\mathbf P}_1)$.
\item[{\rm (iv)}]  $\mathcal{J}(\hat{\mathbf P}^*)=\mathcal{J}(\hat{\mathbf P})^*$.
\item[{\rm (v)}]  $\mathcal{J}(\hat{\mathbf R})$ is unitary (Hermitian) if and only if $\hat{\mathbf R}$ is unitary (Hermitian).
\item[{\rm (vi)}]  $\mathcal{J}$ is an isomorphism from ring $(\hat{\mathbb{Q}}^{n\times n},+,\cdot)$ to ring $(DM(\mathbb{C}^{n\times n}),+,\cdot)$.
\end{itemize}
\end{lemma}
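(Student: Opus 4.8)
The plan is to reduce each item to the corresponding classical property of the complex adjoint matrix of a quaternion matrix, and then transport it through the dual-number structure. Write $\hat{\mathbf P}=\tilde{\mathbf P}_{st}+\tilde{\mathbf P}_{\mathcal I}\varepsilon$, $\hat{\mathbf P}_1=\tilde{\mathbf P}_{1,st}+\tilde{\mathbf P}_{1,\mathcal I}\varepsilon$, $\hat{\mathbf Q}=\tilde{\mathbf Q}_{st}+\tilde{\mathbf Q}_{\mathcal I}\varepsilon$, $\hat{\mathbf R}=\tilde{\mathbf R}_{st}+\tilde{\mathbf R}_{\mathcal I}\varepsilon$ with quaternion standard and dual parts, and for a quaternion matrix $M=B_1+B_2\jj$ with complex $B_1,B_2$ put
\begin{equation*}
\phi(M)=\begin{bmatrix} B_1 & B_2\\ -\overline{B_2} & \overline{B_1}\end{bmatrix},
\end{equation*}
the usual complex adjoint map. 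The single observation driving the proof is that, straight from the definition of $\mathcal J$, $\mathcal J(\hat{\mathbf P})=\phi(\tilde{\mathbf P}_{st})+\phi(\tilde{\mathbf P}_{\mathcal I})\varepsilon$; that is, $\mathcal J$ is $\phi$ applied coefficientwise in $\varepsilon$. Hence it is enough to prove the analogues of (i)--(iv) for $\phi$ on quaternion matrices, and then expand everything in powers of $\varepsilon$ using $\varepsilon^2=0$ and the fact that $\varepsilon$ commutes with complex matrices.

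At the level of $\phi$, the identities $\phi(\tilde{\mathbf O})=O$ and $\phi(\tilde{\mathbf I}_n)=I_{2n}$ are immediate, and additivity $\phi(M+N)=\phi(M)+\phi(N)$ is clear since each block of $\phi$ is additive in its argument (complex conjugation is additive). The real step is multiplicativity $\phi(MN)=\phi(M)\phi(N)$: with $M=B_1+B_2\jj$, $N=C_1+C_2\jj$, using $\jj z=\overline z\,\jj$ for complex scalars $z$ (hence $\jj C=\overline C\,\jj$ entrywise) and $\jj^2=-1$ one gets $MN=(B_1C_1-B_2\overline{C_2})+(B_1C_2+B_2\overline{C_1})\jj$, and a $2\times2$ block multiplication shows $\phi(M)\phi(N)$ equals $\phi$ of this, the off-diagonal blocks matching after expanding conjugates via $\overline{B_2\overline{C_1}}=\overline{B_2}\,C_1$ and $\overline{B_2\overline{C_2}}=\overline{B_2}\,C_2$. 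For the adjoint, from $M^*=\overline{B_1}^{\,T}-B_2^{\,T}\jj$ one checks that both $\phi(M^*)$ and $\phi(M)^*$ equal $\begin{bmatrix}\overline{B_1}^{\,T} & -B_2^{\,T}\\ \overline{B_2}^{\,T} & B_1^{\,T}\end{bmatrix}$. These are standard facts about the quaternion complex adjoint (cf.\ \cite{Zhang1997}), so I would simply cite them or record these short computations.

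Transporting to dual quaternion matrices is then mechanical. Part (i) holds since the dual quaternion zero and identity matrices have vanishing dual parts. Part (iii) follows from $\mathcal J(\hat{\mathbf P}+\hat{\mathbf P}_1)=\phi(\tilde{\mathbf P}_{st}+\tilde{\mathbf P}_{1,st})+\phi(\tilde{\mathbf P}_{\mathcal I}+\tilde{\mathbf P}_{1,\mathcal I})\varepsilon$ and additivity of $\phi$. For (ii), expand $\hat{\mathbf P}\hat{\mathbf Q}=\tilde{\mathbf P}_{st}\tilde{\mathbf Q}_{st}+(\tilde{\mathbf P}_{st}\tilde{\mathbf Q}_{\mathcal I}+\tilde{\mathbf P}_{\mathcal I}\tilde{\mathbf Q}_{st})\varepsilon$, apply $\mathcal J$, and invoke additivity and multiplicativity of $\phi$; the result equals $\bigl(\phi(\tilde{\mathbf P}_{st})+\phi(\tilde{\mathbf P}_{\mathcal I})\varepsilon\bigr)\bigl(\phi(\tilde{\mathbf Q}_{st})+\phi(\tilde{\mathbf Q}_{\mathcal I})\varepsilon\bigr)=\mathcal J(\hat{\mathbf P})\mathcal J(\hat{\mathbf Q})$ once the $\varepsilon^2$-term is dropped. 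Part (iv) is the same expansion together with $\hat{\mathbf P}^*=\tilde{\mathbf P}_{st}^{\,*}+\tilde{\mathbf P}_{\mathcal I}^{\,*}\varepsilon$ and $\phi(M^*)=\phi(M)^*$.

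Finally (v) and (vi) are formal. Since $\mathcal J$ is a bijection onto $DM(\mathbb C^{n\times n})$ (already available), $\hat{\mathbf R}^*=\hat{\mathbf R}$ iff $\mathcal J(\hat{\mathbf R}^*)=\mathcal J(\hat{\mathbf R})$, i.e.\ by (iv) iff $\mathcal J(\hat{\mathbf R})^*=\mathcal J(\hat{\mathbf R})$; and $\hat{\mathbf R}^*\hat{\mathbf R}=\hat{\mathbf R}\hat{\mathbf R}^*=\hat{\mathbf I}_n$ iff, applying $\mathcal J$ and using (i), (ii), (iv) and injectivity, $\mathcal J(\hat{\mathbf R})^*\mathcal J(\hat{\mathbf R})=\mathcal J(\hat{\mathbf R})\mathcal J(\hat{\mathbf R})^*=\hat I_{2n}$, which is (v). For (vi): (iii) and (ii) say $\mathcal J$ preserves addition and multiplication, (i) says it preserves the identity, and $\mathcal J$ is a bijection, so it is a ring isomorphism (in particular $DM(\mathbb C^{n\times n})$ is a subring of $\mathbb{DC}^{2n\times2n}$, closed under multiplication). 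The only genuine computation is the multiplicativity of $\phi$ in (ii) --- the $2\times2$ block identity forced by $\jj C=\overline C\,\jj$ and $\jj^2=-1$; everything else is $\varepsilon$-bookkeeping or a formal consequence.
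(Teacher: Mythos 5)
Your proof is correct. Note that the paper itself gives no proof of this lemma --- it simply cites \cite[Lemma 2]{chen2024dual} --- so there is no in-paper argument to compare against; your write-up supplies the standard self-contained justification. The structural observation that $\mathcal J=\phi(\cdot_{st})+\phi(\cdot_{\mathcal I})\varepsilon$ is exactly right and correctly reduces everything to the classical complex adjoint map $\phi$ of \cite{Zhang1997} plus $\varepsilon$-bookkeeping with $\varepsilon^2=0$; the one genuine computation, multiplicativity of $\phi$ via $\jj z=\overline z\,\jj$ and $\jj^2=-1$, and the identity $\phi(M^*)=\phi(M)^*$ both check out as you state them. The deductions of (v) from (i), (ii), (iv) and injectivity, and of (vi) from bijectivity plus (i)--(iii), are formal and correct. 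No gaps.
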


For convenience, we introduce the following notation. Define the mapping $\mathcal{F}:\hat{\mathbb{Q}}^{n\times 1}\to\mathbb{DC}^{2n\times 1}$ by
\begin{equation*}
\mathcal{F}(\mathbf v_{1}+\mathbf v_{2} \jj+(\mathbf v_{3}+\mathbf v_{4} \jj))=
\begin{bmatrix}
    \mathbf v_{1}\\
    -\overline{\mathbf v_{2}}
    \end{bmatrix}+
    \begin{bmatrix}
    \mathbf v_{3}\\
    -\overline{\mathbf v_{4}}
    \end{bmatrix}\varepsilon,
\end{equation*}
which is a bijection with inverse $\mathcal{F}^{-1}:\mathbb{DC}^{2n\times 1}\to \hat{\mathbb{Q}}^{n\times 1}$ given by
\begin{equation*}
\mathcal{F}^{-1}\left ( \begin{bmatrix}
    \mathbf u_{1}\\
    \mathbf u_{2}
    \end{bmatrix}+
    \begin{bmatrix}
    \mathbf u_{3}\\
    \mathbf u_{4}
    \end{bmatrix}\varepsilon \right )=
    \mathbf u_{1}-\overline{\mathbf u_{2}} \jj+
(\mathbf u_{3}-\overline{\mathbf u_{4}} \jj  ) \varepsilon,
\end{equation*}
where $\mathbf u_{1},\mathbf u_{2},\mathbf u_{3},\mathbf u_{4}
\in \mathbb C^{n\times 1}$.

As shown in \cite[Lemma 3]{chen2024dual}, the study of right eigenvalues for dual quaternion matrices can be simplified by focusing on their representative right eigenvalues, represented as dual complex numbers.

Building on this foundation, we now explore the relationship between dual complex adjoint matrices and dual quaternion matrices in the context of the eigenvalue problem.

\begin{theorem}\label{theorem3.1}
Let $\hat{\mathbf Q}\in\hat{\mathbb{Q}}^{n\times n}$, $\hat{\mathbf v}\in\hat{ \mathbb{Q}}^{n\times 1}$ and $\hat{\lambda}=\lambda_{1}+\lambda_{2}\varepsilon\in \mathbb {DC}$. Denote
$\hat{P}=\mathcal{J}(\hat{\mathbf Q})$,
$\hat{\mathbf u}_1=\mathcal{F}(\hat{\mathbf v})$, and
$\hat{\mathbf u}_2=\mathcal{F}(\hat{\mathbf v} \jj)$,
then
\begin{equation*}
\hat{\mathbf Q}\hat{\mathbf v}=\hat{\mathbf v}\hat{\lambda}
\end{equation*}
is equivalent to
\begin{equation*}
\hat{P}\hat{\mathbf u}_1=\hat{\lambda}\hat{\mathbf u}_1~\text{or}~\hat{P}\hat{\mathbf u}_2=\overline{\hat{\lambda}}\hat{\mathbf u}_2.
\end{equation*}
In addition, $\hat{\mathbf u}_1$ and $\hat{\mathbf u}_2$ are orthogonal, i.e., $\hat{\mathbf u}^*_1\hat{\mathbf u}_2=\hat{\mathbf u}^*_2\hat{\mathbf u}_1=\hat{0}$.
\end{theorem}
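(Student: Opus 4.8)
The plan is to transport the equation $\hat{\mathbf Q}\hat{\mathbf v}=\hat{\mathbf v}\hat\lambda$ through the bijections $\mathcal J$ and $\mathcal F$. The starting point is the observation that for any $\hat{\mathbf w}=(\mathbf w_1+\mathbf w_2\jj)+(\mathbf w_3+\mathbf w_4\jj)\varepsilon\in\hat{\mathbb Q}^{n\times1}$, the vector $\mathcal F(\hat{\mathbf w})$ is precisely the first column of the $2n\times2$ matrix $\mathcal J(\hat{\mathbf w})$. Feeding this into Lemma~\ref{lemma3.2}(ii) applied to the product $\hat{\mathbf Q}\hat{\mathbf w}$ (treating $\hat{\mathbf w}$ as an $n\times1$ matrix) and comparing first columns yields the key identity $\mathcal J(\hat{\mathbf Q})\,\mathcal F(\hat{\mathbf w})=\mathcal F(\hat{\mathbf Q}\hat{\mathbf w})$. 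I would also record the companion identity $\mathcal F(\hat{\mathbf w}\hat\lambda)=\hat\lambda\,\mathcal F(\hat{\mathbf w})$ for $\hat\lambda\in\mathbb{DC}$; this is a one-line computation whose only subtlety is the relation $z\jj=\jj\bar z$ for complex $z$, which sends the $\jj$-coefficients $\mathbf w_2,\mathbf w_4$ to $\overline{\hat\lambda}$-multiples, and this is then cancelled by the conjugation built into the definition of $\mathcal F$.

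Given these two identities, the first clause is immediate: because $\mathcal F$ is a bijection, $\hat{\mathbf Q}\hat{\mathbf v}=\hat{\mathbf v}\hat\lambda$ holds if and only if $\mathcal F(\hat{\mathbf Q}\hat{\mathbf v})=\mathcal F(\hat{\mathbf v}\hat\lambda)$, that is, if and only if $\hat P\hat{\mathbf u}_1=\hat\lambda\hat{\mathbf u}_1$. For the second clause I would right-multiply $\hat{\mathbf Q}\hat{\mathbf v}=\hat{\mathbf v}\hat\lambda$ by the unit quaternion $\jj$; since $\jj$ is invertible this is a reversible operation, and using associativity on the left and $\hat\lambda\jj=\jj\overline{\hat\lambda}$ on the right it becomes $\hat{\mathbf Q}(\hat{\mathbf v}\jj)=(\hat{\mathbf v}\jj)\overline{\hat\lambda}$. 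Applying the first clause with $\hat{\mathbf v}\jj$ in place of $\hat{\mathbf v}$ and $\overline{\hat\lambda}$ in place of $\hat\lambda$, and recalling $\hat{\mathbf u}_2=\mathcal F(\hat{\mathbf v}\jj)$, this is equivalent to $\hat P\hat{\mathbf u}_2=\overline{\hat\lambda}\hat{\mathbf u}_2$. Hence the three assertions --- the eigen-equation, $\hat P\hat{\mathbf u}_1=\hat\lambda\hat{\mathbf u}_1$, and $\hat P\hat{\mathbf u}_2=\overline{\hat\lambda}\hat{\mathbf u}_2$ --- are pairwise equivalent, which in particular gives the stated equivalence with the disjunction of the last two.

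For the orthogonality I would evaluate $\mathcal J(\hat{\mathbf v})^\ast\mathcal J(\hat{\mathbf v})$ in two ways. By Lemma~\ref{lemma3.2}(ii),(iv) it equals $\mathcal J(\hat{\mathbf v}^\ast\hat{\mathbf v})$, and since $\hat{\mathbf v}^\ast\hat{\mathbf v}$ is a scalar dual number its image under $\mathcal J$ is a diagonal matrix. On the other hand, from the definitions one checks that the two columns of $\mathcal J(\hat{\mathbf v})$ are $\hat{\mathbf u}_1$ and $-\hat{\mathbf u}_2$ (the second column being $-\mathcal F(\hat{\mathbf v}\jj)$), so the $(1,2)$ entry of $\mathcal J(\hat{\mathbf v})^\ast\mathcal J(\hat{\mathbf v})$ is $-\hat{\mathbf u}_1^\ast\hat{\mathbf u}_2$; comparing with the diagonal matrix forces $\hat{\mathbf u}_1^\ast\hat{\mathbf u}_2=\hat{0}$, and then $\hat{\mathbf u}_2^\ast\hat{\mathbf u}_1=(\hat{\mathbf u}_1^\ast\hat{\mathbf u}_2)^\ast=\hat{0}$. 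Alternatively, expanding $\hat{\mathbf u}_1^\ast\hat{\mathbf u}_2$ in the four complex blocks of $\hat{\mathbf v}$ and using that a $1\times1$ quantity equals its own transpose does the job directly.

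None of these steps is deep; the proof is essentially bookkeeping with $\mathcal J$, $\mathcal F$, and the quaternion rule $z\jj=\jj\bar z$. The points that need care --- and the reason the hypothesis $\hat\lambda\in\mathbb{DC}$ matters --- are keeping track of which side $\hat\lambda$ multiplies on, so that the $\hat{\mathbf u}_2$-equation correctly carries $\overline{\hat\lambda}$ rather than $\hat\lambda$, and consistently distinguishing entrywise conjugation from the conjugate transpose; I do not expect any genuine obstacle beyond this.
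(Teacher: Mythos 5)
Your proposal is correct, and while it rests on the same structural fact as the paper --- that $\mathcal J(\hat{\mathbf v})=[\mathcal F(\hat{\mathbf v})\ -\mathcal F(\hat{\mathbf v}\jj)]$ and that $\mathcal J$ is a ring homomorphism --- you close the two delicate steps by different and arguably cleaner means. For the equivalence, the paper first obtains the full two-column matrix identity (which yields both eigen-equations simultaneously) and then, for the converse starting from the single equation $\hat P\hat{\mathbf u}_1=\hat\lambda\hat{\mathbf u}_1$, carries out an explicit block computation, taking entrywise conjugates of the two block rows to manufacture the companion equation $\hat P\hat{\mathbf u}_2=\overline{\hat\lambda}\hat{\mathbf u}_2$. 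You instead work with $\mathcal F$ alone: the identities $\mathcal F(\hat{\mathbf Q}\hat{\mathbf w})=\mathcal J(\hat{\mathbf Q})\mathcal F(\hat{\mathbf w})$ and $\mathcal F(\hat{\mathbf w}\hat\lambda)=\hat\lambda\mathcal F(\hat{\mathbf w})$ together with injectivity of $\mathcal F$ give the single-column equivalence directly, and the second equation is then obtained by the reversible substitution $\hat{\mathbf v}\mapsto\hat{\mathbf v}\jj$, $\hat\lambda\mapsto\overline{\hat\lambda}$ using $\hat\lambda\jj=\jj\overline{\hat\lambda}$; this replaces the paper's conjugate bookkeeping with a one-line symmetry argument and makes the pairwise equivalence of all three statements transparent. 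For the orthogonality, the paper verifies $(\hat{\mathbf u}_1^*\hat{\mathbf u}_2)_{st}=(\hat{\mathbf u}_1^*\hat{\mathbf u}_2)_{\mathcal I}=0$ by direct expansion in the four complex blocks of $\hat{\mathbf v}$, whereas your primary argument reads it off the $(1,2)$ entry of $\mathcal J(\hat{\mathbf v})^*\mathcal J(\hat{\mathbf v})=\mathcal J(\hat{\mathbf v}^*\hat{\mathbf v})$, which is diagonal because $\hat{\mathbf v}^*\hat{\mathbf v}$ is a dual number; this is the more conceptual route and generalizes immediately. Both approaches are sound; yours trades the paper's explicit computations for two small structural observations, at the minor cost of having to verify the auxiliary identity $\mathcal F(\hat{\mathbf w}\hat\lambda)=\hat\lambda\mathcal F(\hat{\mathbf w})$, which as you note is a one-line check using $z\jj=\jj\bar z$.
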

\begin{proof}
First, note that $$\mathcal{J}(\hat{\mathbf v})=[\mathcal{F}(\hat{\mathbf v})~-\mathcal{F}(\hat{\mathbf v}\jj)]=[\hat{\mathbf u}_1~-\hat{\mathbf u}_2].$$
Since the mapping $\mathcal{J}$ is a bijection, then it follows from (i) and (ii) in Lemma \ref{lemma3.2} that
\begin{align*}
\hat{\mathbf Q}\hat{\mathbf v}=\hat{\mathbf v}\hat{\lambda}&\Longleftrightarrow \mathcal{J}(\hat{\mathbf Q})\mathcal{J}(\hat{\mathbf v})=
\mathcal{J}(\hat{\mathbf v})\mathcal{J}(\hat{\lambda}\hat{\mathbf I}_n)
\\&\Longleftrightarrow \mathcal{J}(\hat{\mathbf Q})
[\mathcal{F}(\hat{\mathbf v})~-\mathcal{F}(\hat{\mathbf v}\jj)]=
[\mathcal{F}(\hat{\mathbf v})~-\mathcal{F}(\hat{\mathbf v}\jj)]\operatorname{diag}
(\hat{\lambda}\hat{I}_n~\overline{\hat{\lambda}}\hat{I}_n)
\\&\Longleftrightarrow\hat{P}[\hat{\mathbf u}_1~-\hat{\mathbf u}_2]=[\hat{\lambda}\hat{\mathbf u}_1~-\overline{\hat{\lambda}}\hat{\mathbf u}_2].
\end{align*}
Therefore, if
$\hat{\mathbf{Q}}\hat{\mathbf v}=\hat{\mathbf v}\hat{\lambda}$, then it follows that $\hat{P}\hat{\mathbf u}_1=\hat{\lambda}\hat{\mathbf u}_1$ and $\hat{P}\hat{\mathbf u}_2=\overline{\hat{\lambda}}\hat{\mathbf u}_2$.

Conversely, if $\hat{P}\hat{\mathbf u}_1=\hat{\lambda}\hat{\mathbf u}_1$, suppose that
$$\hat{P}=\mathcal{J}(\hat{\mathbf{Q}})=\begin{bmatrix}
\hat{P}_1  & \hat{P}_2 \\
-\overline{\hat{P}_2} & \overline{\hat{P}_1}
\end{bmatrix}, ~\hat{\mathbf u}_1=\mathcal{F}(\hat{\mathbf v})=\begin{bmatrix}
\hat{\mathbf v}_1 \\
-\overline{\hat{\mathbf v}_2}
\end{bmatrix}.$$ Then the eigenvalue equation implies
$$\hat{P}_1\hat{\mathbf v}_1-\hat{P}_2\overline{\hat{\mathbf v}_2}=\hat{\lambda}\hat{\mathbf v}_1, \quad
-\overline{\hat{P}_2}\hat{\mathbf v}_1 - \overline{\hat{P}_1}\overline{\hat{\mathbf v}_2}=-\hat{\lambda}\overline{\hat{\mathbf v}_2}.$$
Taking conjugates and rearranging terms, we obtain
$$\hat{P}_1\hat{\mathbf v}_2+\hat{P}_2\overline{\hat{\mathbf v}_1}=\overline{\hat{\lambda}}\hat{\mathbf v}_2, \quad
-\overline{\hat{P}_2}\hat{\mathbf v}_2 +\overline{\hat{P}_1}\overline{\hat{\mathbf v}_1}=\overline{\hat{\lambda}}\overline{\hat{\mathbf v}_1}.$$
Therefore, $\mathcal{J}(\hat{\mathbf{Q}})\mathcal{F}(\hat{\mathbf v}\jj)=\overline{\hat{\lambda}}\mathcal{F}(\hat{\mathbf v}\jj)$, i.e., $\hat{P}\hat{\mathbf u}_2=\overline{\hat{\lambda}}\hat{\mathbf u}_2$.
Consequently, we have $\hat{P}[\hat{\mathbf u}_1~-\hat{\mathbf u}_2]=[\hat{\lambda}\hat{\mathbf u}_1~-\overline{\hat{\lambda}}\hat{\mathbf u}_2]$, from which it follows that $\hat{\mathbf{Q}}\hat{\mathbf v}=\hat{\mathbf v}\hat{\lambda}$.
A similar argument shows that if  $\hat{P}\hat{\mathbf u}_2=\hat{\lambda}\hat{\mathbf u}_2$, then
$\hat{\mathbf{Q}}\hat{\mathbf v}=\hat{\mathbf v}\hat{\lambda}$ also holds.

We now show that $\hat{\mathbf u}_1$ and $\hat{\mathbf u}_2$ are orthogonal.
Suppose that $\hat{\mathbf v}=\mathbf v_{st,1}+\mathbf v_{st,2}\jj+(\mathbf v_{\mathcal I,1}+\mathbf v_{\mathcal I,2}\jj)\varepsilon$. Since
\begin{align*}
(\hat{\mathbf u}^*_1\hat{\mathbf u}_2)_{st}
&=\begin{bmatrix}
\mathbf v_{st,1}\\
-\overline{\mathbf v_{st,2}}
\end{bmatrix}^*
\begin{bmatrix}
-\mathbf v_{st,2}\\
-\overline{\mathbf v_{st,1}}
\end{bmatrix}
=-\overline{\mathbf v_{st,1}}^T\mathbf v_{st,2}+\mathbf v_{st,2}^T \overline{\mathbf v_{st,1}}=\tilde{0},
\end{align*}
\begin{align*}
(\hat{\mathbf u}^*_1\hat{\mathbf u}_2)_{\mathcal I}
&=\begin{bmatrix}
\mathbf v_{\mathcal I,1}\\
-\overline{\mathbf v_{\mathcal I,2}}
\end{bmatrix}^*
\begin{bmatrix}
-\mathbf v_{st,2}\\
-\overline{\mathbf v_{st,1}}
\end{bmatrix}
+\begin{bmatrix}
\mathbf v_{st,1}\\
-\overline{\mathbf v_{st,2}}
\end{bmatrix}^*
\begin{bmatrix}
-\mathbf v_{\mathcal I,2}\\
-\overline{\mathbf v_{\mathcal I,1}}
\end{bmatrix}
\\&=-\overline{\mathbf v_{\mathcal I,1}}^T\mathbf v_{st,2}+\mathbf v_{\mathcal I,2}^T \overline{\mathbf v_{st,1}}
-\overline{\mathbf v_{st,1}}^T\mathbf v_{\mathcal I,2}+\mathbf v_{st,2}^T \overline{\mathbf v_{\mathcal I,1}}=\tilde{0},
\end{align*}
it follows that $\hat{\mathbf u}^*_1\hat{\mathbf u}_2=\hat{0}$. Similarly, we have
$\hat{\mathbf u}^*_2\hat{\mathbf u}_1=\hat{0}$. Therefore, $\hat{\mathbf u}_1$ and $\hat{\mathbf u}_2$ are orthogonal.
\qed\end{proof}

Since $\hat{\mathbf Q}\in\hat{\mathbb{H}}^n$ has exactly $n$ dual number eigenvalues, we can derive the following corollary from Theorem\ref{theorem3.1}.
\begin{corollary}\label{corollary3.1}
Let $\hat{\mathbf Q}\in\hat{\mathbb{H}}^n$, and let $\{ \hat{\lambda}_{i}\}^{n}_{i=1}$ and $\{\hat{\mathbf v}^{i}\}^{n}_{i=1}$ be eigenvalues and corresponding eigenvectors of $\hat{\mathbf Q}$, respectively. Then the set
$$\{\hat{\lambda}_1,\hat{\lambda}_1,\hat{\lambda}_2,\hat{\lambda}_2,\ldots,\hat{\lambda}_n,\hat{\lambda}_n\}$$ consists of all the eigenvalues of $\hat{P}=\mathcal{J}(\hat{\mathbf Q})$. Moreover, $\mathcal{F}(\mathbf v^{i})$ and $\mathcal{F}(\mathbf v^{i}\jj)$ are two orthogonal eigenvectors of $\hat{P}$ with respect to the eigenvalue $\hat{\lambda}_{i}$.

Conversely, if $\hat{\mathbf u}^i\in \mathbb{DC}^{2n\times 1}$ is an eigenvector of $\hat{P}$ corresponding to the eigenvalue $\hat{\lambda}_{i}$, then $\mathcal{F}^{-1}(\hat{\mathbf u}^i)$ is an eigenvector of $\hat{\mathbf Q}$ associated with the same eigenvalue $\hat{\lambda}_{i}$.
\end{corollary}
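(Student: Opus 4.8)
The plan is to obtain all three assertions from Theorem~\ref{theorem3.1} together with the unitary decomposition of a dual quaternion Hermitian matrix. The observation that makes Theorem~\ref{theorem3.1} bite here is that every $\hat\lambda_i$ is a dual number, so $\overline{\hat\lambda_i}=\hat\lambda_i$ and the two alternatives $\hat P\hat{\mathbf u}_1=\hat\lambda\hat{\mathbf u}_1$ and $\hat P\hat{\mathbf u}_2=\overline{\hat\lambda}\hat{\mathbf u}_2$ appearing there collapse into the single statement that $\hat\lambda_i$ is an eigenvalue of $\hat P$ with the two vectors $\mathcal F(\hat{\mathbf v}^i)$ and $\mathcal F(\hat{\mathbf v}^i\jj)$ as eigenvectors.

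For the forward direction, fix $i$ and start from $\hat{\mathbf Q}\hat{\mathbf v}^i=\hat{\mathbf v}^i\hat\lambda_i$. Applying Theorem~\ref{theorem3.1} with $\hat{\mathbf v}=\hat{\mathbf v}^i$ and $\hat\lambda=\hat\lambda_i$ gives at once $\hat P\mathcal F(\hat{\mathbf v}^i)=\hat\lambda_i\mathcal F(\hat{\mathbf v}^i)$, $\hat P\mathcal F(\hat{\mathbf v}^i\jj)=\hat\lambda_i\mathcal F(\hat{\mathbf v}^i\jj)$, and the orthogonality $\mathcal F(\hat{\mathbf v}^i)^{*}\mathcal F(\hat{\mathbf v}^i\jj)=\hat{0}$. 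The only loose end is appreciability of these two vectors: $\hat{\mathbf v}^i$ is appreciable, hence so is $\hat{\mathbf v}^i\jj$ since $\jj$ is invertible, and $\mathcal F$ plainly maps appreciable dual quaternion vectors to appreciable dual complex vectors. Thus each of $\mathcal F(\hat{\mathbf v}^i)$ and $\mathcal F(\hat{\mathbf v}^i\jj)$ is a genuine eigenvector of $\hat P$ for $\hat\lambda_i$, which is the second sentence of the corollary.

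To see that $\{\hat\lambda_1,\hat\lambda_1,\ldots,\hat\lambda_n,\hat\lambda_n\}$ is the whole spectrum of $\hat P$, I would pass through the decomposition $\hat{\mathbf Q}=\hat{\mathbf U}^{*}\hat\Sigma\hat{\mathbf U}$ with $\hat{\mathbf U}\in\hat{\mathbb{U}}^{n}_2$ and $\hat\Sigma=\operatorname{diag}(\hat\lambda_1,\ldots,\hat\lambda_n)$. Applying $\mathcal J$ and using Lemma~\ref{lemma3.2}(ii), (iv), (v) yields $\hat P=\mathcal J(\hat{\mathbf U})^{*}\mathcal J(\hat\Sigma)\mathcal J(\hat{\mathbf U})$ with $\mathcal J(\hat{\mathbf U})$ a $2n\times 2n$ unitary dual complex matrix, and a one-line evaluation of $\mathcal J$ on the diagonal dual-number matrix $\hat\Sigma$ (its defining complex blocks $A_2,A_4$ vanish while $A_1,A_3$ are real, hence self-conjugate, diagonal matrices) shows $\mathcal J(\hat\Sigma)=\operatorname{diag}(\hat\lambda_1,\ldots,\hat\lambda_n,\hat\lambda_1,\ldots,\hat\lambda_n)$. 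Hence $\hat P$ is unitarily similar to this diagonal dual-number matrix; since a $2n$-dimensional dual complex Hermitian matrix has exactly $2n$ eigenvalues and the $2n$ columns of $\mathcal J(\hat{\mathbf U})^{*}$ are orthonormal eigenvectors realizing the diagonal entries, the spectrum is exactly that multiset. I expect this middle step to be where care is needed: the forward part alone produces $2n$ eigenvectors but does not by itself show they are independent (distinct indices $i\ne j$ would require extra orthogonality checks), so routing through the explicit diagonal form is the clean way to pin down the multiplicities.

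Finally, for the converse, given an eigenvector $\hat{\mathbf u}^i\in\mathbb{DC}^{2n\times 1}$ of $\hat P$ for $\hat\lambda_i$, set $\hat{\mathbf v}:=\mathcal F^{-1}(\hat{\mathbf u}^i)$, so that $\hat{\mathbf u}^i=\mathcal F(\hat{\mathbf v})$. Appreciability of $\hat{\mathbf u}^i$ forces appreciability of $\hat{\mathbf v}$, because the standard part of $\mathcal F^{-1}$ of a vector vanishes precisely when the standard part of that vector does. Then $\hat P\mathcal F(\hat{\mathbf v})=\hat\lambda_i\mathcal F(\hat{\mathbf v})$ with $\overline{\hat\lambda_i}=\hat\lambda_i$, so the equivalence in Theorem~\ref{theorem3.1}, used in the direction ``$\hat P\hat{\mathbf u}_1=\hat\lambda\hat{\mathbf u}_1\Rightarrow\hat{\mathbf Q}\hat{\mathbf v}=\hat{\mathbf v}\hat\lambda$'', gives $\hat{\mathbf Q}\hat{\mathbf v}=\hat{\mathbf v}\hat\lambda_i$; that is, $\mathcal F^{-1}(\hat{\mathbf u}^i)$ is an eigenvector of $\hat{\mathbf Q}$ associated with $\hat\lambda_i$.
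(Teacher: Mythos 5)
Your proof is correct and follows the route the paper intends: the paper states this corollary without proof as an immediate consequence of Theorem~\ref{theorem3.1} together with the fact that $\hat{\mathbf Q}\in\hat{\mathbb{H}}^n$ has exactly $n$ dual-number eigenvalues, and your forward and converse arguments are precisely that derivation (using $\overline{\hat\lambda_i}=\hat\lambda_i$ to collapse the two alternatives). Your middle step --- pushing the unitary decomposition $\hat{\mathbf Q}=\hat{\mathbf U}^*\hat\Sigma\hat{\mathbf U}$ through $\mathcal J$ to exhibit $\hat P$ as unitarily similar to $\operatorname{diag}(\hat\lambda_1,\ldots,\hat\lambda_n,\hat\lambda_1,\ldots,\hat\lambda_n)$ --- is a sound and appropriate way to pin down the multiplicity claim that the paper leaves implicit.
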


It follows from Corollary \ref{corollary3.1} that an eigenvector $\hat{\mathbf v}$ of $\hat{\mathbf Q}\in \hat{\mathbb{H}}^n$ corresponds to two orthogonal eigenvectors $\mathcal{F}(\hat{\mathbf v})$ and $\mathcal{F}(\hat{\mathbf v} \jj)$ of $\mathcal{J}(\hat{\mathbf Q})$. We define the transformation between these two eigenvectors as follows:
\begin{equation*}
\mathcal{H}\left(\begin{bmatrix}
    \mathbf u_1\\
    \mathbf u_2
    \end{bmatrix}+
    \begin{bmatrix}
    \mathbf u_3\\
    \mathbf u_4
    \end{bmatrix}\varepsilon\right)
=\begin{bmatrix}
    \overline{\mathbf u_2}\\
    -\overline{\mathbf u_1}
    \end{bmatrix}+
    \begin{bmatrix}
    \overline{\mathbf u_4}\\
    -\overline{\mathbf u_3}
    \end{bmatrix}\varepsilon,
\end{equation*}
where $\mathbf u_{1},\mathbf u_{2},\mathbf u_{3},\mathbf u_{4}
\in \mathbb C^{n\times 1}$. Then we have $\mathcal{H}(\mathcal{F}(\hat{\mathbf v}))=\mathcal{F}(\hat{\mathbf v} \jj)$ and $\mathcal{H}(\mathcal{F}(\hat{\mathbf v} \jj))=-\mathcal{F}(\hat{\mathbf v})$.

\section{Dual Complex Adjoint Matrix based New Power Method}\label{section5}
In this section, based on the relationship between dual complex adjoint matrices and dual quaternion matrices in the context of the
eigenvalue problem established in the previous section, we propose an improved power method for computing eigenvalues of dual quaternion Hermitian matrices, referred to as DCAM-PM.
Furthermore, by incorporating Aitken extrapolation into DCAM-PM, we develop an enhanced version, referred to as ADCAM-PM, and demonstrate that it achieves a faster convergence rate than the standard power method.

By Corollary \ref{corollary3.1}, for any $\hat{\mathbf Q}\in \hat{\mathbb H}^n$, computing its strict dominant eigenpair can be reduced to find the strict dominant eigenpair of its dual complex adjoint matrix $\mathcal{J}(\hat{\mathbf Q})$. Specifically, by obtaining the dominant eigenpair $\hat{\lambda}$ and $\hat{\mathbf u}$ of $\mathcal{J}(\hat{\mathbf Q})$, one can deduce that $\hat{\lambda}$ and $\mathcal{F}^{-1}(\hat{\mathbf u})$ form the strict dominant eigenpair of $\hat{\mathbf Q}$. Based on this result, we propose an improved power method (Algorithm \ref{Power_method_algorithm}).

\begin{algorithm}[h]
    \caption{DCAM-PM} \label{Power_method_algorithm}
    \begin{algorithmic}
    \REQUIRE $\hat{\mathbf Q}\in \hat{\mathbb H}^n$, initial iteration vector $\hat{\mathbf v}^{(0)}\in \hat{\mathbb Q}^{n\times 1}_2$, the maximal iteration number $k_{max}$ and the tolerance $\delta$.
    \ENSURE $\hat{\mathbf v}^*$ and $\hat{\lambda}^{( k ) }$.
    \STATE Compute $\hat{P}=\mathcal{J}(\hat{\mathbf Q})$ and $\hat{\mathbf u}^{(0)}=\mathcal{F}(\hat{\mathbf v}^{(0)})$.
    \FOR {$k = 1$ to $k_{max}$}
    \STATE Update $\hat{\mathbf y}^{(k)}=\hat{P}\hat{\mathbf u}^{(k-1)}$, $\hat{\lambda}^{(k)}=(\hat{\mathbf u}^{(k-1)})^*\hat{\mathbf y}^{(k)}$, $\hat{\mathbf u}^{(k)}=\frac{\hat{\mathbf y}^{( k)}}{\| \hat{\mathbf y}^{(k) } \|_2 }$.
    \STATE If $\| \hat{\mathbf y}^{(k) } -\hat{\mathbf u}^{(k-1) }\hat{\lambda}^{( k ) } \| _{2^R}\le \delta$, then Stop.
    \ENDFOR
    \STATE Compute $\hat{\mathbf v}^*=\mathcal{F}^{-1}(\hat{\mathbf u}^{(k)})$.
    \end{algorithmic}
\end{algorithm}

Since DCAM-PM essentially applies the power method to compute the dominant eigenpair of a dual complex adjoint matrix, we conduct a convergence analysis tailored to dual complex Hermitian matrices. Inspired by \cite[Theorem 4.1]{Cui2023}, we state the corresponding convergence theorem concisely here, omitting the detailed proof for brevity and to avoid redundancy.

\begin{theorem}\label{Theorem2}
Let $\hat{P}=P_1+P_2\varepsilon \in \mathbb{DC}^{n\times n}$ be a dual complex Hermitian matrix, suppose that $P_1\ne O$, $\{\hat{\lambda}_i=\lambda_{i,st}+\lambda_{i,\mathcal I}\}_{i=1}^{n}$ are eigenvalues of $\hat{P}$, and $\hat{\lambda}_1$ is the strict dominant eigenvalue of $P_1$  with multiplicity $l$, i.e., $$\hat{\lambda}_1=\hat{\lambda}_2=\cdots=\hat{\lambda}_l,~
|\lambda_{1,st}|>|\lambda_{l+1,st}|\ge\cdots\ge|\lambda_{n,st}|.$$
Let $\hat{\mathbf u}_1,\hat{\mathbf u}_2,\cdots,\hat{\mathbf u}_n$ be corresponding orthogonal eigenvalues. Suppose that the initial iteration vector satisfies $\hat{\mathbf u}^{(0)}=\sum_{i=1}^{n}\hat{\mathbf u}_i\hat{\alpha}_i\in\hat{\mathbb Q}^{n\times 1}_2$ and $\sum_{j=1}^l|\tilde{\alpha}_{j,st}|\neq 0$, where $\tilde{\alpha}_{j,st}$ is the standard part of $\hat{\alpha}_{j}$, the sequence obtained by Algorithm \ref{Power_method_algorithm} satisfies
\begin{equation*}
\hat{\mathbf{u}}^{(k)}=s^k\sum_{j=1}^l\hat{\mathbf{u}}_j\hat{\gamma}_j(\hat{1}+\tilde{O}_D(|\lambda_{l+1,st}/ \lambda_{1,st}|^k)),
\end{equation*}
where $s=\operatorname{sgn}(\lambda_{1,st})$,
$\hat{\gamma}_j=\hat{\alpha}_j/\sqrt{\sum_{i=1}^l|\hat{\alpha}_i|^2}$, and
\begin{equation*}
\hat{\lambda}^{(k)}=\hat{\lambda}_1(\hat{1}+\tilde{O}_D(|\lambda_{l+1,st}/ \lambda_{1,st}|^{2k})).
\end{equation*}
\end{theorem}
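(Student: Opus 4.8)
The plan is to pass to an exact closed form for the iterates in the orthonormal eigenbasis of $\hat{P}$ and then expand everything in the small parameter $\rho:=|\lambda_{l+1,st}/\lambda_{1,st}|<1$; both assertions then reduce to bookkeeping in dual arithmetic, with one genuinely delicate point.

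First I would record the facts that keep every division and square root legitimate. Since $P_1\neq O$ and $|\lambda_{1,st}|$ is the largest of the $|\lambda_{i,st}|$, we have $\lambda_{1,st}\neq 0$, so $\hat\lambda_1$ is appreciable; since $\sum_{j=1}^l|\tilde\alpha_{j,st}|\neq 0$ and the standard parts of $\hat{\mathbf u}_1,\dots,\hat{\mathbf u}_l$ are linearly independent, the vector $\hat{\mathbf w}:=\sum_{j=1}^l\hat{\mathbf u}_j\hat\alpha_j$ and the scalar $\sum_{i=1}^l\hat\alpha_i^*\hat\alpha_i$ are appreciable; and $\hat{\mathbf u}^{(0)}$, being of unit $2$-norm, is appreciable. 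Because each $\hat\lambda_i$ is a dual number it commutes with vectors, so $\hat P\hat{\mathbf u}_i=\hat{\mathbf u}_i\hat\lambda_i$ iterates to $\hat P^{k}\hat{\mathbf u}^{(0)}=\sum_{i=1}^n\hat{\mathbf u}_i\hat\alpha_i\hat\lambda_i^{k}$, which is appreciable for every $k$ by the same linear independence (the coefficients with $j\le l$ not vanishing). Consequently every $\hat{\mathbf y}^{(k)}$ in Algorithm~\ref{Power_method_algorithm} is appreciable, each normalization is valid, and a one-line induction gives $\hat{\mathbf u}^{(k)}=\hat P^{k}\hat{\mathbf u}^{(0)}/\|\hat P^{k}\hat{\mathbf u}^{(0)}\|_2$ and $\hat\lambda^{(k)}=(\hat P^{k-1}\hat{\mathbf u}^{(0)})^*\hat P(\hat P^{k-1}\hat{\mathbf u}^{(0)})/\|\hat P^{k-1}\hat{\mathbf u}^{(0)}\|_2^2$.

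For the eigenvector estimate I would factor out $\hat\lambda_1^{k}$ and use $\hat\lambda_j=\hat\lambda_1$ for $j\le l$ to write $\hat P^{k}\hat{\mathbf u}^{(0)}=\hat\lambda_1^{k}(\hat{\mathbf w}+\hat{\mathbf e}_k)$ with $\hat{\mathbf e}_k=\sum_{i>l}\hat{\mathbf u}_i\hat\alpha_i(\hat\lambda_i/\hat\lambda_1)^{k}$. The one scalar estimate used throughout is $(\hat\lambda_i/\hat\lambda_1)^{k}=\tilde O_D(\rho^{k})$ for $i>l$: writing $\hat\lambda_i/\hat\lambda_1=r_i+t_i\varepsilon$ with $|r_i|\le\rho$ gives $(r_i+t_i\varepsilon)^{k}=r_i^{k}+k\,r_i^{k-1}t_i\varepsilon$, whose standard part is $O(\rho^{k})$ and whose dual part is $O(k\rho^{k})$ — the extra factor $k$ being exactly the polynomial permitted by the $\tilde O_D$ notation. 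Hence $\hat{\mathbf e}_k=\tilde O_D(\rho^{k})$, so $\|\hat P^{k}\hat{\mathbf u}^{(0)}\|_2^2=|\hat\lambda_1^{k}|^2(\hat{\mathbf w}^*\hat{\mathbf w})(\hat 1+\tilde O_D(\rho^{k}))$, and taking the square root of an appreciable dual number, $\|\hat P^{k}\hat{\mathbf u}^{(0)}\|_2=|\hat\lambda_1^{k}|\,\|\hat{\mathbf w}\|_2(\hat 1+\tilde O_D(\rho^{k}))$. Dividing, using the cancellation $\hat\lambda_1^{k}/|\hat\lambda_1^{k}|=\operatorname{sgn}(\lambda_{1,st})^{k}=s^{k}$ (indeed $|\hat\lambda_1^{k}|=s^{k}\hat\lambda_1^{k}$ as dual numbers, since absolute value only inserts the real sign) together with $\|\hat{\mathbf w}\|_2=\sqrt{\sum_{i=1}^l\hat\alpha_i^*\hat\alpha_i}$ from orthonormality, yields $\hat{\mathbf u}^{(k)}=s^{k}\sum_{j=1}^l\hat{\mathbf u}_j\hat\gamma_j+\tilde O_D(\rho^{k})$, the asserted expansion.

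The delicate part is the $\rho^{2k}$ rate for $\hat\lambda^{(k)}$; feeding the additive $O(\rho^{k-1})$ error of $\hat{\mathbf u}^{(k-1)}$ directly into the Rayleigh quotient gives only $O(\rho^{k})$, so I would instead use the closed form. Since $\hat{\mathbf u}^{(k-1)}$ is a positive-dual-number multiple of $\hat P^{k-1}\hat{\mathbf u}^{(0)}=\sum_{i=1}^n\hat{\mathbf u}_i\hat\alpha_i\hat\lambda_i^{k-1}$, orthonormality of the $\hat{\mathbf u}_i$ together with $\hat\lambda_i^*=\hat\lambda_i$ (the eigenvalues are real dual numbers) collapses all cross terms, so that
\begin{equation*}
\hat\lambda^{(k)}=\frac{\sum_{i=1}^n(\hat\alpha_i^*\hat\alpha_i)\,\hat\lambda_i^{2k-1}}{\sum_{i=1}^n(\hat\alpha_i^*\hat\alpha_i)\,\hat\lambda_i^{2k-2}}.
\end{equation*}
Factoring $\hat\lambda_1^{2k-1}$ from the numerator and $\hat\lambda_1^{2k-2}$ from the denominator, using $\hat\lambda_j=\hat\lambda_1$ for $j\le l$, the appreciability of $\sum_{j=1}^l\hat\alpha_j^*\hat\alpha_j$, and the scalar estimate above now at exponents $2k-1$ and $2k-2$ (both $\tilde O_D(\rho^{2k})$), the quotient collapses to $\hat\lambda_1(\hat 1+\tilde O_D(\rho^{2k}))/(\hat 1+\tilde O_D(\rho^{2k}))=\hat\lambda_1(\hat 1+\tilde O_D(\rho^{2k}))$. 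I expect this step to be the main obstacle: the doubling of the exponent — hence of the convergence rate — comes from the expansion coefficients being \emph{squared} in the Rayleigh quotient, not from the accuracy of $\hat{\mathbf u}^{(k-1)}$ as an eigenvector, and one must thread the dual-number bookkeeping (appreciability at every division and square root, polynomial-in-$k$ factors absorbed into $\tilde O_D$) through without slips. As the whole argument parallels \cite[Theorem~4.1]{Cui2023} with quaternions replaced by complex numbers, the remaining routine estimates can be imported essentially verbatim.
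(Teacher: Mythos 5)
Your proposal is correct and follows essentially the same route the paper intends: the paper omits the proof and defers to the argument of Cui--Qi's Theorem 4.1, which is exactly the closed-form eigenbasis expansion $\hat P^{k}\hat{\mathbf u}^{(0)}=\sum_i\hat{\mathbf u}_i\hat\alpha_i\hat\lambda_i^{k}$, the dual-number estimate $(r+t\varepsilon)^{k}=r^{k}+kr^{k-1}t\varepsilon=\tilde O_D(\rho^{k})$, and the collapsed Rayleigh quotient $\sum_i|\hat\alpha_i|^2\hat\lambda_i^{2k-1}/\sum_i|\hat\alpha_i|^2\hat\lambda_i^{2k-2}$ that you carry out. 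The appreciability bookkeeping and the squared-coefficient explanation of the $\rho^{2k}$ rate are both handled correctly.
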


To extend DCAM-PM for computing all eigenpairs of dual quaternion Hermitian matrices, we first introduce the following lemma.

\begin{lemma}\label{lemma3.7}
Let $\hat{\mathbf Q}\in \hat{\mathbb{H}}^n$, $\hat{\lambda}\in \mathbb D$ and $\hat{\mathbf v}\in \hat{\mathbb Q}^{n\times 1}_2$ is an eigenpair of $\hat{\mathbf Q}$, then
\begin{equation*}
\mathcal{J}(\hat{\mathbf Q}-\hat{\lambda}\hat{\mathbf v}\hat{\mathbf v}^*)=\mathcal{J}(\hat{\mathbf Q})-\hat{\lambda}\mathcal{F}(\hat{\mathbf v})\mathcal{F}(\hat{\mathbf v})^*-\hat{\lambda}\mathcal{H}(\mathcal{F}(\hat{\mathbf v}))\mathcal{H}(\mathcal{F}(\hat{\mathbf v}))^*.
\end{equation*}
\end{lemma}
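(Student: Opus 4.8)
The plan is to reduce the whole identity to the ring-homomorphism properties of $\mathcal{J}$ collected in Lemma \ref{lemma3.2}, together with the block decomposition of $\mathcal{J}$ on a vector that already appeared in the proof of Theorem \ref{theorem3.1}; no analytic input is needed, only bookkeeping.

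First I would write $\hat\lambda\hat{\mathbf v}\hat{\mathbf v}^{*}=(\hat\lambda\hat{\mathbf I}_n)(\hat{\mathbf v}\hat{\mathbf v}^{*})$ and apply parts (ii)--(iv) of Lemma \ref{lemma3.2} to get
$$\mathcal{J}(\hat{\mathbf Q}-\hat\lambda\hat{\mathbf v}\hat{\mathbf v}^{*})=\mathcal{J}(\hat{\mathbf Q})-\mathcal{J}(\hat\lambda\hat{\mathbf I}_n)\,\mathcal{J}(\hat{\mathbf v})\,\mathcal{J}(\hat{\mathbf v})^{*}.$$
Here is the one place the hypothesis $\hat\lambda\in\mathbb D$ (a genuine \emph{dual number}) is used: writing $\hat\lambda=\lambda_{st}+\lambda_{\mathcal I}\varepsilon$ with $\lambda_{st},\lambda_{\mathcal I}\in\mathbb R$ and feeding $\hat\lambda\hat{\mathbf I}_n$ into the definition of $\mathcal{J}$, the off-diagonal blocks vanish and the diagonal blocks are fixed by conjugation (since $\overline{\lambda_{st}}=\lambda_{st}$, $\overline{\lambda_{\mathcal I}}=\lambda_{\mathcal I}$), so $\mathcal{J}(\hat\lambda\hat{\mathbf I}_n)=\lambda_{st}\hat I_{2n}+\lambda_{\mathcal I}\hat I_{2n}\varepsilon=\hat\lambda\hat I_{2n}$. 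This is a dual-number multiple of the identity, hence commutes through and can be pulled out as a scalar, giving
$$\mathcal{J}(\hat{\mathbf Q}-\hat\lambda\hat{\mathbf v}\hat{\mathbf v}^{*})=\mathcal{J}(\hat{\mathbf Q})-\hat\lambda\,\mathcal{J}(\hat{\mathbf v})\,\mathcal{J}(\hat{\mathbf v})^{*}.$$

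Next I would invoke the identity $\mathcal{J}(\hat{\mathbf v})=[\,\mathcal{F}(\hat{\mathbf v})\ \ -\mathcal{F}(\hat{\mathbf v}\jj)\,]$ already recorded at the beginning of the proof of Theorem \ref{theorem3.1}. Block-multiplying this $2n\times 2$ matrix by its conjugate transpose, the two cross terms survive with sign $(-1)(-1)=1$, so
$$\mathcal{J}(\hat{\mathbf v})\,\mathcal{J}(\hat{\mathbf v})^{*}=\mathcal{F}(\hat{\mathbf v})\mathcal{F}(\hat{\mathbf v})^{*}+\mathcal{F}(\hat{\mathbf v}\jj)\mathcal{F}(\hat{\mathbf v}\jj)^{*}.$$
Substituting the relation $\mathcal{F}(\hat{\mathbf v}\jj)=\mathcal{H}(\mathcal{F}(\hat{\mathbf v}))$ established just before Section \ref{section4} turns the right-hand side into $\mathcal{F}(\hat{\mathbf v})\mathcal{F}(\hat{\mathbf v})^{*}+\mathcal{H}(\mathcal{F}(\hat{\mathbf v}))\mathcal{H}(\mathcal{F}(\hat{\mathbf v}))^{*}$, which yields exactly the claimed formula.

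I do not anticipate a real obstacle. The only subtle point — and it is genuinely the crux — is the step $\mathcal{J}(\hat\lambda\hat{\mathbf I}_n)=\hat\lambda\hat I_{2n}$: for a $\hat\lambda$ with a nonreal standard or dual part the conjugated diagonal blocks would differ from the unconjugated ones and the scalar could not be extracted, which is precisely why the lemma restricts to $\hat\lambda\in\mathbb D$. The eigenpair and unit-norm hypotheses on $(\hat\lambda,\hat{\mathbf v})$ are not actually needed for the algebraic identity itself; they only record the rank-one deflation setting in which the lemma will later be applied, so I would simply not invoke them and keep the write-up short.
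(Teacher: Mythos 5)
Your proposal is correct and follows essentially the same route as the paper: apply properties (ii)--(iv) of Lemma \ref{lemma3.2}, use the identity $\mathcal{J}(\hat{\mathbf v})=[\mathcal{F}(\hat{\mathbf v})~-\mathcal{H}(\mathcal{F}(\hat{\mathbf v}))]$, and expand the block outer product. You are in fact slightly more careful than the paper in spelling out why $\mathcal{J}(\hat{\lambda}\hat{\mathbf I}_n)=\hat{\lambda}\hat{I}_{2n}$ requires $\hat{\lambda}\in\mathbb{D}$, and you correctly observe that the final expansion is a pure outer-product identity (the paper's appeal to orthogonality at that step is not actually needed).
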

\begin{proof}
From the identity $\mathcal{J}(\hat{\mathbf v})=[\mathcal{F}(\hat{\mathbf v})~-\mathcal{H}(\mathcal{F}(\hat{\mathbf v}))]$
and properties (ii), (iii), and (iv) in Lemma \ref{lemma3.2}, it follows that
\begin{align*}
\mathcal{J}(\hat{\mathbf Q}-\hat{\lambda}\hat{\mathbf v}\hat{\mathbf v}^*)&=\mathcal{J}(\hat{\mathbf Q})-\mathcal{J}(\hat{\lambda}\hat{\mathbf I})\mathcal{J}(\hat{\mathbf v})\mathcal{J}(\hat{\mathbf v})^*.
\\&=\mathcal{J}(\hat{\mathbf Q})-\hat{\lambda}[\mathcal{F}(\hat{\mathbf v})~-\mathcal{H}(\mathcal{F}(\hat{\mathbf v}))][\mathcal{F}(\hat{\mathbf v})~-\mathcal{H}(\mathcal{F}(\hat{\mathbf v}))]^*
\\&=\mathcal{J}(\hat{\mathbf Q})-\hat{\lambda}\mathcal{F}(\hat{\mathbf v})\mathcal{F}(\hat{\mathbf v})^*-\hat{\lambda}\mathcal{H}(\mathcal{F}(\hat{\mathbf v}))\mathcal{H}(\mathcal{F}(\hat{\mathbf v}))^*.
\end{align*}
The final equation is based on the orthogonality of $\mathcal{F}(\hat{\mathbf v})$ and $\mathcal{H}(\mathcal{F}(\hat{\mathbf v}))$.
\qed\end{proof}

To compute all eigenvalues of the dual quaternion Hermitian matrix $\hat{\mathbf Q}$ by DCAM-PM, once the strictly dominant eigenvalue $\hat{\lambda}^k$ and its corresponding eigenvector $\hat{\mathbf u}^k\in \hat{\mathbb Q}^{n\times 1}_2$ for the current dual complex adjoint matrix $\hat{P}_{k}$ are obtained, Lemma \ref{lemma3.7} guarantees that the next eigenpair of $\hat{\mathbf Q}$ can be found by simply updating $\hat{P}_{k}$ to $\hat{P}_{k+1}$ via $\hat{P}_{k+1}=\hat{P}_k-\hat{\lambda}^k\hat{\mathbf u}^k(\hat{\mathbf u}^{k})^*-\hat{\lambda}^k\mathcal{H}(\hat{\mathbf u}^k)\mathcal{H}(\hat{\mathbf u}^{k})^*$, and then applying DCAM-PM to compute the strict dominant eigenpair of $\hat{P}_{k+1}$. The complete procedure is detailed in Algorithm \ref{algorithm3.2}.

\begin{algorithm}[!h]
    \caption{DCAMA-PM}\label{algorithm3.2}
    \begin{algorithmic}
    \REQUIRE
    $\hat{\mathbf Q}\in \hat{\mathbb H}^n$, dimension $n$ and tolerance $\gamma$.
    \ENSURE $\{ \hat{\lambda}^i\}^{n}_{i=1}$ and $\{ \hat{\mathbf w}^i\}^{n}_{i=1}$.
    \STATE Compute $\hat{P}=\mathcal{J}(\hat{\mathbf Q})$. Let $\hat{P}_1=\hat{P}$.
    \FOR {$k = 1$ to $n$}
    \STATE If $\| \hat{P}_{k}\|_{F^R}\le \gamma$, then Stop.
    \STATE Compute the strict dominant eigenpair $\hat{\lambda}^k$ and $\hat{\mathbf u}^k$ of $\hat{P}_k$ by power method.
    \STATE Compute $\hat{\mathbf v}^k=\mathcal{H}(\hat{\mathbf u}^k)$ and $\hat{\mathbf w}^k=\mathcal{F}^{-1}(\hat{\mathbf u}^k)$.
    \STATE Update $\hat{P}_{k+1}=\hat{P}_k-\hat{\lambda}^k\hat{\mathbf u}^k(\hat{\mathbf u}^{k})^*-\hat{\lambda}^k\hat{\mathbf v}^k(\hat{\mathbf v}^{k})^*$.
    \ENDFOR
    \end{algorithmic}
\end{algorithm}

Since the convergence rate of the power method depends on the distribution of eigenvalues (see Theorem \ref{Theorem2}), it can sometimes converge slowly. To address this, we accelerate convergence by incorporating Aitken extrapolation into DCAM-PM. Aitken extrapolation is an effective technique for enhancing the convergence of iterative sequences. Given a sequence $\{x_k\}_{k=1}^{\infty}$, the Aitken iterative sequence $\{y_k\}_{k=1}^{\infty}$ is defined by
$$y_k=x_k-\frac{(x_{k+1}-x_k)^2}{x_{k+2}+x_k-2x_{k+1}}.$$

The following lemma demonstrates that if the original sequence converges linearly, the Aitken iterative sequence achieves a faster convergence rate.

\begin{lemma}\label{lem:Aitken}
Let $x_k=A+Bq^k+o(q^k),~k=1,2,\ldots$, where $0<q<1$ and $A,B\in \mathbb{R}$ are constants. Denote $$y_k=x_k-\frac{(x_{k+1}-x_k)^2}{x_{k+2}+x_k-2x_{k+1}}.$$ Then we have
$$\lim_{k \to \infty} \frac{|y_k-A|}{|x_k-A|}=0.$$
\end{lemma}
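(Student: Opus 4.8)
The plan is to work with the error sequence $e_k := x_k - A = Bq^k + o(q^k)$ and to reduce the claim to showing $y_k - A = o(q^k)$. Under the hypothesis of genuine linear convergence (i.e. $B \neq 0$, which is what ``converges linearly'' at rate $q$ means), we have $|x_k - A| = |e_k| = |B|\,q^k(1+o(1))$, so $x_k \neq A$ for all large $k$ and the quotient $|y_k - A|/|x_k - A|$ is well defined; moreover $|x_k - A|$ is comparable to $q^k$, so $\lim_{k\to\infty}|y_k-A|/|x_k-A| = 0$ is equivalent to $y_k - A = o(q^k)$.

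Next I would expand the first and second forward differences in terms of $e_k$. Since $o(q^{k+1}) = o(q^k)$ and $o(q^{k+2}) = o(q^k)$,
\begin{align*}
x_{k+1}-x_k &= e_{k+1}-e_k = B(q-1)q^k + o(q^k),\\
x_{k+2}-2x_{k+1}+x_k &= e_{k+2}-2e_{k+1}+e_k = B(q-1)^2 q^k + o(q^k).
\end{align*}
Because $q \neq 1$ and $B \neq 0$, the second difference equals $B(q-1)^2 q^k(1+o(1))$, which is nonzero for all large $k$; hence $y_k$ is well defined eventually, removing any division-by-zero concern.

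Then I would form the Aitken correction $c_k := (x_{k+1}-x_k)^2/(x_{k+2}-2x_{k+1}+x_k)$. Squaring the first difference gives $B^2(q-1)^2 q^{2k} + o(q^{2k})$ (using $q^k\cdot o(q^k) = o(q^{2k})$ and $o(q^k)^2 = o(q^{2k})$). Dividing by the second difference and factoring $q^k$ out of numerator and denominator,
\begin{equation*}
c_k = \frac{B^2(q-1)^2 q^{2k}+o(q^{2k})}{B(q-1)^2 q^k+o(q^k)} = \frac{B^2(q-1)^2 q^{k}+o(q^{k})}{B(q-1)^2+o(1)} = Bq^k + o(q^k),
\end{equation*}
where the last step uses $B(q-1)^2 \neq 0$, so that $1/(B(q-1)^2 + o(1)) = (1+o(1))/(B(q-1)^2)$. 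Consequently $y_k - A = e_k - c_k = (Bq^k + o(q^k)) - (Bq^k + o(q^k)) = o(q^k)$, and dividing by $|x_k - A| = |B|q^k(1+o(1))$ finishes the proof.

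The only real work is careful bookkeeping of the Landau symbols through the division in the computation of $c_k$, and in particular verifying that the $Bq^k$ terms in $e_k$ and $c_k$ cancel \emph{exactly}, leaving a genuine $o(q^k)$ rather than merely $O(q^k)$; this is precisely where the hypotheses $B \neq 0$ and $q \neq 1$ are used, both to make $c_k$ well defined and to pin down its leading term. I do not expect any serious obstacle beyond this. (If $B = 0$ the defining formula for $y_k$ may degenerate through a vanishing denominator, so the statement is to be read under the standing assumption that the convergence is linear with a nonzero leading coefficient.)
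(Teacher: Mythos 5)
Your proof is correct and follows essentially the same route as the paper's: expand the first and second forward differences of $e_k = x_k - A$, show the Aitken correction equals $Bq^k + o(q^k)$, and conclude $y_k - A = o(q^k)$. The only difference is that you make explicit the standing assumption $B \neq 0$ (needed both for the denominator to be eventually nonzero and for $|x_k - A|$ to be comparable to $q^k$ in the final division), which the paper uses tacitly.
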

\begin{proof}
Since
\begin{align*}
y_k-A&=x_k-\frac{(x_{k+1}-x_n)^2}{x_{k+2}+x_k-2x_{k+1}}-A
\\&=Bq^k-\frac{(Bq^k(q-1)+o(q^k))^2}
{B(q^k-2q^{k+1}+q^{k+2})+o(q^k)}=o(q^k),
\end{align*}
then we have $$\lim_{k \to \infty} \frac{|y_k-A|}{|x_k-A|}=\lim_{k \to \infty}\frac{o(q^k)}{q^k}=0.$$
\end{proof}

We now apply the Aitken extrapolation to the power method. Given a dual quaternion Hermitian matrix $\hat{\mathbf Q}\in \hat{\mathbb{H}}^n$. Suppose that $\{\lambda^{(k)}\}_{k=1}^{\infty}$ and
$\{\mathbf{\hat{v}}^{(k)}\}_{k=1}^{\infty}$ are the iterative sequences for the strict dominant eigenvalue and corresponding eigenvector of $\hat{\mathbf Q}$ generated by the power method, respectively. We apply Aitken extrapolation to the standard part and dual part of the sequence, respectively, i.e., let
\begin{subequations}\label{equ1}
\begin{align}
\mathbf{\tilde{w}}^{(k)}_{st,i}&=\mathbf{\tilde{v}}^{(k)}_{st,i}-\frac{(\mathbf{\tilde{v}}^{(k+1)}_{st,i}-\mathbf{\tilde{v}}^{(k)}_{st,i})^2}{\mathbf{\tilde{v}}^{(k+2)}_{st,i}+\mathbf{\tilde{v}}^{(k)}_{st,i}-2\mathbf{\tilde{v}}^{(k+1)}_{st,i}}, \label{equ1_1}
\\\mathbf{\tilde{w}}^{(k)}_{\mathcal I,i}&=\mathbf{\tilde{v}}^{(k)}_{\mathcal I,i}-\frac{(\mathbf{\tilde{v}}^{(k+1)}_{\mathcal I,i}-\mathbf{\tilde{v}}^{(k)}_{\mathcal I,i})^2}{\mathbf{\tilde{v}}^{(k+2)}_{\mathcal I,i}+\mathbf{\tilde{v}}^{(k)}_{\mathcal I,i}-2\mathbf{\tilde{v}}^{(k+1)}_{\mathcal I,i}}, \label{equ1_2}
\end{align}
\end{subequations}
where $\mathbf{\tilde{w}}^{(k)}_{st,i}$ and $\mathbf{\tilde{w}}^{(k)}_{\mathcal I,i}$ denote the i-th components of $\mathbf{\tilde{w}}^{(k)}_{st}$ and $\mathbf{\tilde{w}}^{(k)}_{\mathcal I}$, respectively, and let
\begin{subequations}\label{equ2}
\begin{align}
\kappa^{(k)}_{st}&=\lambda^{(k)}_{st}-\frac{(\lambda^{(k+1)}_{st}-\lambda^{(k)}_{st})^2}{\lambda^{(k+2)}_{st}+\lambda^{(k)}_{st}-2\lambda^{(k+1)}_{st}}, \label{equ2_1}
\\\kappa^{(k)}_{\mathcal I}&=\lambda^{(k)}_{\mathcal I}-\frac{(\lambda^{(k+1)}_{\mathcal I}-\lambda^{(k)}_{\mathcal I})^2}{\lambda^{(k+2)}_{\mathcal I}+\lambda^{(k)}_{\mathcal I}-2\lambda^{(k+1)}_{\mathcal I}}. \label{equ2_2}
\end{align}
\end{subequations}
We refer $\{\kappa^{(k)}\}_{k=1}^{\infty}$ and
$\{\mathbf{\hat{w}}^{(k)}\}_{k=1}^{\infty}$ to the Aitken iterative sequences for $\{\lambda^{(k)}\}_{k=1}^{\infty}$ and
$\{\mathbf{\hat{v}}^{(k)}\}_{k=1}^{\infty}$.

The following theorem shows that the sequences $\{\mathbf{\hat{w}}^{(k)}\}_{k=1}^{\infty}$ and $\{\kappa^{(k)}\}_{k=1}^{\infty}$ converge to the strict dominant eigenvalue and corresponding eigenvector of $\hat{\mathbf Q}$, respectively, at a rate faster than that of the power method.

\begin{theorem}
Given $\hat{\mathbf Q}\in \hat{\mathbb{H}}^n$ and $\mathbf{\hat{v}}^{(0)}\in\hat{\mathbb{Q}}^{n\times 1}_2$, where $\tilde{\mathbf{Q}}_{\text{st}}\neq\tilde{\mathbf{O}}_{n\times n}$. Suppose that $\{\lambda_j\}_{j=1}^{n}$ are eigenvalues of $\hat{\mathbf Q}$ and $\{\mathbf{\hat{u}}_j\}_{j=1}^{n}$ are corresponding orthonormal eigenvectors. Suppose that $\mathbf{\hat{v}}^{(0)}=\sum_{j=1}^n\mathbf{\hat{u}}_j\mathbf{\hat{\alpha}}_j$, where $\mathbf{\hat{\alpha}}_j\in\hat{\mathbb{Q}}$ for $j=1,2,\ldots,n$. Let
$\{\lambda^{(k)}\}_{k=1}^{\infty}$ and
$\{\mathbf{\hat{v}}^{(k)}\}_{k=1}^{\infty}$ are iterative sequences for strict dominant eigenvalue and eigenvector of $\hat{\mathbf Q}$ generated by power method, respectively. Suppose that the eigenvalues of $\hat{\mathbf Q}$ satisfy $\lambda_{l_j+1}=\lambda_{l_j+2}=\cdots=\lambda_{l_{j+1}}$ for $j=0,1,\cdots,m$, where $0=l_0<l_1<\cdots<l_{m+1}=n$, and $|\lambda_{l_1,st}|>|\lambda_{l_2,st}|\ge\ldots\ge|\lambda_{l_{m+1},st}|$. Suppose that $\sum_{j=1}^{l_1}|\tilde{\alpha}_{j,\text{st}}|\neq0$. Let $\{\kappa^{(k)}\}_{k=1}^{\infty}$ and
$\{\mathbf{\hat{w}}^{(k)}\}_{k=1}^{\infty}$ be the Aitken iterative sequences generated from $\{\lambda^{(k)}\}_{k=1}^{\infty}$ and
$\{\mathbf{\hat{v}}^{(k)}\}_{k=1}^{\infty}$ by \eqref{equ1} and \eqref{equ2}.
Then $\{\kappa^{(k)}\}_{k=1}^{\infty}$ converges to
$\lambda_1$ and $\{s^j\hat{\mathbf{w}}^{(k)}\}_{k=1}^{\infty}$ converges to
$\sum_{j=1}^{l_1}\mathbf{\hat{u}}_j\hat{\gamma}_j$ with a faster rate than power method, where $s=sgn(\lambda_{1,\mathrm{st}})$, $\hat{\gamma}_j=\frac{\hat{\alpha}_j}{\sqrt{\sum_{i=1}^{l_1}|\hat{\alpha}_i|^2}}$.
\end{theorem}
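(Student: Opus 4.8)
The plan is to reduce the convergence analysis to a componentwise scalar statement and then invoke Lemma~\ref{lem:Aitken}. First I would apply Theorem~\ref{Theorem2} (via the dual complex adjoint matrix, using Corollary~\ref{corollary3.1} to transport eigenpairs of $\hat{\mathbf Q}$ to those of $\hat P=\mathcal J(\hat{\mathbf Q})$) to obtain the precise asymptotic form of the power-method iterates: there is $s=\operatorname{sgn}(\lambda_{1,\mathrm{st}})$ such that
\begin{equation*}
s^k\hat{\mathbf v}^{(k)}=\sum_{j=1}^{l_1}\hat{\mathbf u}_j\hat\gamma_j\bigl(\hat 1+\tilde O_D(q^k)\bigr),\qquad
\lambda^{(k)}=\lambda_1\bigl(\hat 1+\tilde O_D(q^{2k})\bigr),
\end{equation*}
where $q=|\lambda_{l_2,\mathrm{st}}/\lambda_{1,\mathrm{st}}|<1$. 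Splitting into standard and dual parts, each scalar component $\tilde{\mathbf v}^{(k)}_{\mathrm{st},i}$, $\tilde{\mathbf v}^{(k)}_{\mathcal I,i}$ and each of $\lambda^{(k)}_{\mathrm{st}}$, $\lambda^{(k)}_{\mathcal I}$ then has the form $A+Bq^{k}+o(q^{k})$ (or with $q^{2k}$ in the eigenvalue case), exactly the hypothesis of Lemma~\ref{lem:Aitken}, with the target limit $A$ being the corresponding component of $\sum_{j=1}^{l_1}\hat{\mathbf u}_j\hat\gamma_j$ or of $\lambda_1$. One subtlety to address here is that $s^k$ alternates when $s=-1$: I would note that Aitken extrapolation applied to $x_k$ and to $(-1)^kx_k$ are related in a controlled way, or more cleanly absorb the sign into the leading term by working with $s^k\hat{\mathbf v}^{(k)}$ throughout and observing that \eqref{equ1} is compatible with this (the sign cancels in the second-difference quotient up to the stated $o(q^k)$ bookkeeping).

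Next I would run Lemma~\ref{lem:Aitken} on each component: it yields $y_k-A=o(q^k)$ while $x_k-A=Bq^k+o(q^k)$, so the Aitken sequence converges and
\[
\lim_{k\to\infty}\frac{|y_k-A|}{|x_k-A|}=0
\]
whenever $B\neq0$ (and trivially $y_k=A$ eventually, or $o(q^k)$, when $B=0$). Assembling the components back into dual quaternion vectors shows $\{s^j\hat{\mathbf w}^{(k)}\}$ — here I read the ``$s^j$'' in the statement as the sign factor $s^k$ matching the iteration index, consistent with Theorem~\ref{Theorem2} — converges to $\sum_{j=1}^{l_1}\hat{\mathbf u}_j\hat\gamma_j$, and $\{\kappa^{(k)}\}$ converges to $\lambda_1$; the ``faster rate'' claim is precisely the vanishing-ratio conclusion of the lemma applied componentwise and then lifted to the $2^R$-norm (resp. the dual-number metric), using that a finite maximum of ratios each tending to $0$ still tends to $0$.

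The main obstacle I anticipate is \emph{legitimizing the componentwise reduction of the convergence \emph{rate}}, not merely the limit. Theorem~\ref{Theorem2} gives the iterates only up to a multiplicative dual factor $\hat 1+\tilde O_D(q^k)$, and turning this into clean scalar expansions $A+Bq^k+o(q^k)$ for every component requires expanding the product, separating standard and dual parts of $\tilde O_D(c^k h(k))$, and checking that the polynomial factor $h(k)$ is still $o(q^{\prime k})$ for any slightly larger ratio $q'<1$ — i.e., replacing $\tilde O_D$ by an honest $o$ of a geometric sequence. I would handle this by choosing $q$ strictly between $|\lambda_{l_2,\mathrm{st}}/\lambda_{1,\mathrm{st}}|$ and $1$ so that $q^{k}h(k)=o(q^{k})$ is absorbed, and by treating the degenerate cases where a denominator $x_{k+2}+x_k-2x_{k+1}$ in \eqref{equ1}--\eqref{equ2} is zero or near-zero (which happens exactly when the relevant leading coefficient $B$ vanishes) separately, observing that in that case the corresponding component is already $A+o(q^k)$ so the Aitken step is either skipped or harmless. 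A secondary, purely bookkeeping obstacle is matching the dual-number order notation: I would phrase the final rate statement as $\hat{\mathbf w}^{(k)}-\sum_{j=1}^{l_1}\hat{\mathbf u}_j\hat\gamma_j = o_D(q^k)$ against $\hat{\mathbf v}^{(k)}$ being exactly $\tilde O_D(q^k)$ (and similarly $\kappa^{(k)}-\lambda_1=o_D(q^{2k})$ versus $\tilde O_D(q^{2k})$), so that ``faster rate'' has a precise meaning consistent with the definitions in Section~\ref{Preliminaries}.
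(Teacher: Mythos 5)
Your overall architecture --- reduce everything to scalar component sequences of the form $x_k=A+Bq^k+o(q^k)$ and invoke Lemma~\ref{lem:Aitken} componentwise on standard and dual parts --- is exactly the paper's, and your reading of the ``$s^j$'' in the statement as the sign factor attached to the iteration index is the intended one. The gap is in how you propose to obtain that scalar form. Theorem~\ref{Theorem2} supplies only an \emph{upper bound} of type $\tilde O_D(q^k)$, i.e.\ $O(q^k h(k))$ for some polynomial $h$; it carries no information about the existence or the value of a leading geometric coefficient $B$. Lemma~\ref{lem:Aitken} needs an asymptotic \emph{equality} with an identified nonzero $B$: its proof divides by $B(q^k-2q^{k+1}+q^{k+2})+o(q^k)$ and, in the final ratio, compares $o(q^k)$ against a quantity that is genuinely of order $q^k$. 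Your proposed repair --- enlarging $q$ to some $q'\in(q,1)$ so that $q^kh(k)=o(q'^k)$ --- does the opposite of what is needed: it pushes the entire error into the remainder, i.e.\ forces $B=0$, whereupon the Aitken denominator is itself $o(q'^k)$, the extrapolation quotient is an indeterminate $o(q'^k)^2/o(q'^k)$, and the target ratio $|y_k-A|/|x_k-A|$ is $o/o$. The ``skipped or harmless'' fallback can at best recover convergence of $\{\kappa^{(k)}\}$, not the claimed acceleration, which is the whole content of the theorem.

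The paper closes this gap by not using Theorem~\ref{Theorem2} at all. Since $\hat{\mathbf Q}$ is Hermitian with an orthonormal eigenbasis, the power iterates have the closed forms
\begin{equation*}
\hat{\mathbf v}^{(k)}=\frac{\sum_{j=1}^n\hat{\mathbf u}_j\lambda_j^k\hat\alpha_j}{\sqrt{\sum_{j=1}^n|\lambda_j^k\hat\alpha_j|^2}},
\qquad
\lambda^{(k)}=\frac{\sum_{j=1}^n\lambda_j^{2k+1}|\hat\alpha_j|^2}{\sum_{j=1}^n|\lambda_j^k\hat\alpha_j|^2},
\end{equation*}
and, writing $\lambda=\mu+\eta\varepsilon$, the dual-number power expands exactly as $\lambda^k=\mu^k(1+k\mu^{-1}\eta\varepsilon)$. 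Substituting this and separating standard and dual parts lets one read off the \emph{actual} coefficient of $q^{2k}$ (with $q=|\mu_2/\mu_1|$), e.g.\ $A=\sum_{t}\frac{x_t}{x_1}(u_t-1)$ for the standard part of the eigenvalue sequence, after which Lemma~\ref{lem:Aitken} applies. That explicit identification of the leading term is the step your proposal is missing, and it cannot be extracted from an order bound. (If you carry out the computation you will also notice, as the paper's own displayed expression for $B$ shows, that the dual-part coefficient acquires a factor linear in $k$, so even then some extra care is needed to match the hypotheses of Lemma~\ref{lem:Aitken} precisely.)
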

\begin{proof}
By the process of power method, we have $$\mathbf{\hat{v}}^{(k)}=\frac{\sum_{j=1}^n\hat{\mathbf{u}}_j\lambda_j^k\hat{\alpha}_j}{\sqrt{\sum_{j=1}^n|\lambda_j^k\hat{\alpha}_j|^2}}\quad {\rm and}\quad \lambda^{(k)}=\frac{\sum_{j=1}^n\lambda_j^{2k+1}|\mathbf{\hat{\alpha}}_j|^2}{\sum_{j=1}^n|\lambda_j^k\mathbf{\hat{\alpha}}_j|^2}.$$

Suppose that $\lambda_{1+l_{j-1}}=\mu_j+\eta_j\varepsilon$ for $j=1,\ldots, m+1$. By direct derivation, we obtain $|\lambda_{j}^k\hat{\alpha}_{j}|=|\lambda_{j}^k||\hat{\alpha}_{j}|$. Furthermore, we have $\lambda_{1+l_{j-1}}^k=\mu_{j}^k+k\mu_{j}^{k-1}\eta_{j}\epsilon=\mu_j^k(1+k\beta_j\epsilon)$ and $|\lambda_{1+l_{j-1}}^k|=|\mu_{j}|^k(1+k\beta_j\epsilon)$, where $\beta_j=\mu_{j}^{-1}\eta_j$. 

Suppose that $|\mu_1|>|\mu_2|=\ldots=|\mu_s|>|\mu_{s+1}|\ge\ldots\ge|\mu_{m+1}|$. Denote $\sum_{k=1+l_{j-1}}^{l_j}|\hat{\alpha}_k|^2=x_j+y_j\varepsilon$, for $j=1,2,\ldots,m+1$. Denote $\frac{\lambda_{1+l_{j-1}}}{\lambda_1}=u_j+v_j\varepsilon$, for $j=2,3,\ldots,s$. Denote $q=\frac{|\mu_2|}{|\mu_1|}$.
Then we have
\begin{align*}
\sum_{j=1}^n\lambda_j^{2k+1}|\hat{\alpha}_j|^2&=
\sum_{j=1}^{m+1}\lambda_j|\mu_j|^{2k}(1+2k\beta_j\varepsilon)(x_j+y_j\varepsilon)
\\&=\lambda_1|\mu_1|^{2k}(x_1+q^{2k}\sum_{t=2}^{s}u_tx_t+o(q^{2k}))
+\lambda_1|\mu_1|^{2k}(2k\beta_1x_1+y_1\\&+q^{2k}(\sum_{t=2}^{s}x_tv_t+2k\sum_{t=2}^{s}\beta_tx_tu_t
+\sum_{t=2}^{s}y_tu_t)+o(q^{2k}))\varepsilon,
\end{align*}
and
\begin{align*}
\sum_{j=1}^n|\lambda_j^k\hat{\alpha}_j|^2&=
\sum_{j=1}^{m+1}|\mu_i|^{2k}(1+2k\beta_j\varepsilon)(x_j+y_j\varepsilon)
\\&=|\mu_1|^{2k}(x_1+q^{2k}\sum_{t=2}^{s}x_t+o(q^{2k}))
\\&+|\mu_1|^{2k}(2k\beta_1x_1+y_1+q^{2k}(2k\sum_{t=2}^{s}\beta_tx_t+\sum_{t=2}^{s}y_t)+o(q^{2k}))\varepsilon.
\end{align*}
Then
\begin{align*}
\frac{\sum_{j=1}^n\lambda_j^{2k+1}|\mathbf{\hat{\alpha}}_j|^2}{\sum_{j=1}^n|\lambda_j^k\mathbf{\hat{\alpha}}_j|^2}
&=\lambda_1(1+Aq^{2k}+o(q^{2k})+(Bq^{2k}+o(q^{2k}))\varepsilon)
\\&=\mu_1+\mu_1Aq^{2k}+o(q^{2k})+(\eta_1+(\eta_1A+\mu_1B)q^{2k}+o(q^{2k}))\varepsilon,
\end{align*}
where $$A=\sum_{t=2}^{s}\frac{x_t}{x_1}(u_t-1),B=\sum_{t=2}^{s} \frac{x_tv_t}{x_1}+\sum_{t=2}^{s}(u_t-1)(\frac{y_tx_1-x_ty_1}{x_1^2}+\frac{2kx_t(\beta_t-\beta_1)}{x_1}),$$ which are constants related only to $\{\lambda_j\}$ and $\{\hat{\alpha}_j\}$.
Then, by Lemma \ref{lem:Aitken}, the sequence $\{\kappa^{(k)}\}_{k=1}^{\infty}$ converges to $\lambda_1$ at a faster rate  than that of the power method.

Similarly, it can be shown that the sequence $\{s^j\hat{\mathbf{w}}^{(k)}\}_{k=1}^{\infty}$ converges to $\sum_{j=1}^{l_1}\mathbf{\hat{u}}_j\hat{\gamma}_j$ at a faster rate than that of the power method.
\end{proof}

Similarly, Aitken extrapolation can be applied to the DCAM-PM algorithm, and the theoretical results remain consistent. To reduce computational cost, the Aitken extrapolation can be deferred until the iteration sequence reaches a specified accuracy. The algorithm is structured as follows.

\begin{algorithm}
    \caption{ADCAM-PM} \label{APower_method_algorithm}
    \begin{algorithmic}
    \REQUIRE $\hat{\mathbf Q}\in \hat{\mathbb H}^n$, initial iteration vector $\hat{\mathbf u}^{(0)}\in \hat{\mathbb Q}^{n\times 1}_2$, the maximal iteration number $k_{max}$, parameter $\gamma$ and the tolerance $\delta$.
    \ENSURE $\hat{\mathbf u}^*$ and $\hat{\kappa}^{(k-2)}$.
    \STATE Compute $\hat{P}=\mathcal{J}(\hat{\mathbf Q})$ and $\hat{\mathbf v}^{(0)}=\mathcal{F}(\hat{\mathbf u}^{(0)})$.
    \FOR {$k = 1$ to $k_{max}$}
    \STATE Update $\hat{\mathbf y}^{(k)}=\hat{P}\hat{\mathbf v}^{(k-1)}$, $\hat{\lambda}^{(k)}=(\hat{\mathbf v}^{(k-1)})^*\hat{\mathbf y}^{(k)}$, and $\hat{\mathbf v}^{(k)}=\frac{\hat{\mathbf y}^{( k)}}{\|\hat{\mathbf y}^{(k) }\|_2}$.
    \IF{$\|\hat{\mathbf y}^{(k) }-\hat{\mathbf v}^{(k-1) }\hat{\lambda}^{(k)}\|_{2^R}\le \gamma$}
    \STATE Compute $\hat{\mathbf w}^{(k-2)}$ and $\hat{\kappa}^{(k-2)}$ by \eqref{equ1} and \eqref{equ2}, respectively.
     \STATE If $\| \hat{P}\hat{\mathbf w}^{(k-2)}-\hat{\kappa}^{(k-2)}\hat{\mathbf w}^{(k-2)}\|_{2^R}\le \delta$, then Stop.
    \ENDIF
    \ENDFOR
    \STATE Compute $\hat{\mathbf u}^*=\mathcal{F}^{-1}(\hat{\mathbf w}^{(k-2)})$.
    \end{algorithmic}
\end{algorithm}

\section{A Novel Method for Computing All Eigenpairs}\label{section4}
In the previous section, we introduced an improved power method for computing the strictly dominant eigenvalues of dual quaternion Hermitian matrices and further enhanced its convergence rate by integrating Aitken extrapolation. However, the power method faces a limitation when the matrix possesses two eigenvalues with identical standard parts but distinct dual parts, rendering the method ineffective.  To address this challenge, in this section, we propose a novel algorithm for computing all eigenpairs of dual quaternion Hermitian matrices based on the relationship between dual complex adjoint matrices and dual quaternion matrices. To this end, we first present a method for computing the eigen-decomposition of dual complex Hermitian matrices, as described below.

\begin{theorem}\label{theorem3.3}
Let $\hat{P}= P_1+ P_2\varepsilon\in \mathbb{DC}^{n\times n}$ be a dual complex Hermitian matrix, $U$ be a unitary matrix such that $U^* P_1 U=\operatorname{diag} \left (\lambda _1I_{n_1},\lambda _2I_{n_2},\cdots, \lambda _t I_{n_t}\right )$, where $\lambda_1>\lambda_2>\cdots>\lambda _t$ and $n_1+n_2+\cdots+n_t=n$. Denote $U^* P_2 U=(P_{ij})$, where $P_{ij}\in C^{n_i\times n_j}$.  Let $U_i$ be a unitary matrix with dimension $n_i$ such that $U_i^* P_{ii}U_i=\operatorname{diag}\left (\mu^i _1,\mu^i _2,\cdots, \mu^i _{n_i}\right )$. Denote $V=\operatorname{diag}\left (U_1,U_2,\cdots, U_t\right )$ and $Q=V^*U^*P_2 U V=(Q_{ij})$, where $Q_{ij}\in C^{n_i\times n_j}$. Denote $T=(T_{ij})$, where $T_{ij}\in C^{n_i\times n_j}$, $T_{ii}=O_{n_i\times n_i}$ for $i=1,2,\ldots,n$, $T_{ij}=Q_{ij}/(\lambda_j -\lambda_i)$ for $i\ne j$. Let $\hat{U}=U V(I+T\varepsilon )$, then  $\hat{U}^*\hat{P}\hat{U}$ is a diagonal dual matrix, i.e., $$\hat{U}^*\hat{ P}\hat{U}=\operatorname{diag}\left ( \lambda_1+\mu^1_1\varepsilon , \cdots,\lambda_1+\mu^1_{n_1}\varepsilon ,\cdots, \lambda_t+\mu^t_1\varepsilon , \cdots,\lambda_t+\mu^t_{n_t}\varepsilon \right ).$$
In addition, $\hat{U}$ is a unitary dual complex matrix. Denote $\hat{\Sigma}=\hat{U}^*\hat{P}\hat{U}$, then
$\hat{U}\hat{\Sigma}\hat{U}^*$ is the eigendecomposition of $\hat{P}$.
\end{theorem}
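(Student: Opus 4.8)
The plan is to reduce the whole statement to one block-matrix computation over the dual numbers and then read off unitarity and the eigendecomposition as formal consequences. Throughout I would use the factorization $\hat U = UV(I+T\varepsilon) = UV + UVT\varepsilon$, so that $\hat U^* = (I+T^*\varepsilon)V^*U^*$. The key preliminary observation is that $V=\operatorname{diag}(U_1,\dots,U_t)$ has exactly the block pattern of $\operatorname{diag}(\lambda_1 I_{n_1},\dots,\lambda_t I_{n_t})$, hence commutes with it; therefore $V^*U^*P_1UV = \operatorname{diag}(\lambda_1 I_{n_1},\dots,\lambda_t I_{n_t})=:D$, while by construction $Q := V^*U^*P_2UV = (Q_{ij})$ has diagonal blocks $Q_{ii}=U_i^*P_{ii}U_i=\operatorname{diag}(\mu^i_1,\dots,\mu^i_{n_i})$. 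Since $P_2$ is Hermitian and $UV$ is unitary, $Q$ is Hermitian, so $Q_{ij}=Q_{ji}^*$. Expanding with $\varepsilon^2=0$,
\[
\hat U^*\hat P\hat U=(I+T^*\varepsilon)(D+Q\varepsilon)(I+T\varepsilon)=D+(DT+T^*D+Q)\varepsilon ,
\]
so the standard part is the diagonal matrix $D$ and everything hinges on showing $DT+T^*D+Q$ equals $\operatorname{diag}(\mu^1_1,\dots,\mu^t_{n_t})$.

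The main step is the $(i,j)$ block of $DT+T^*D+Q$. Since $D$ is block diagonal, $(DT)_{ij}=\lambda_iT_{ij}$ and $(T^*D)_{ij}=(T_{ji})^*\lambda_j$. For $i\neq j$, the defining choice $T_{ij}=Q_{ij}/(\lambda_j-\lambda_i)$, together with $(T_{ji})^*=Q_{ji}^*/(\lambda_i-\lambda_j)=Q_{ij}/(\lambda_i-\lambda_j)$ (using $\lambda_i\in\mathbb R$ and the Hermitian symmetry of $Q$), gives
\[
(DT+T^*D+Q)_{ij}=\frac{\lambda_iQ_{ij}}{\lambda_j-\lambda_i}+\frac{\lambda_jQ_{ij}}{\lambda_i-\lambda_j}+Q_{ij}=Q_{ij}\Bigl(\frac{\lambda_i-\lambda_j}{\lambda_j-\lambda_i}+1\Bigr)=0 ,
\]
where $\lambda_i\neq\lambda_j$ is used. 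For $i=j$, since $T_{ii}=O$, the block reduces to $Q_{ii}=\operatorname{diag}(\mu^i_1,\dots,\mu^i_{n_i})$. Hence $\hat U^*\hat P\hat U=\operatorname{diag}(\lambda_1+\mu^1_1\varepsilon,\dots,\lambda_t+\mu^t_{n_t}\varepsilon)$, as claimed.

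For unitarity, the same computation shows $(T^*)_{ij}=-T_{ij}$ for all $i,j$ (trivially when $i=j$), i.e.\ $T$ is skew-Hermitian, so $\hat U^*\hat U=(I+T^*\varepsilon)(I+T\varepsilon)=I+(T+T^*)\varepsilon=\hat I_n$ and similarly $\hat U\hat U^*=UV\bigl(I+(T+T^*)\varepsilon\bigr)V^*U^*=\hat I_n$; thus $\hat U$ is a unitary dual complex matrix, and since its standard part $UV$ is nonsingular, $\hat U$ (and each of its columns) is appreciable. Setting $\hat\Sigma=\hat U^*\hat P\hat U$ and multiplying on the left by $\hat U$ and on the right by $\hat U^*$, unitarity yields $\hat U\hat\Sigma\hat U^*=\hat P$; since $\hat\Sigma$ is a diagonal dual matrix, its diagonal entries are eigenvalues of $\hat P$ with the corresponding columns of $\hat U$ as eigenvectors, so $\hat U\hat\Sigma\hat U^*$ is the eigendecomposition of $\hat P$. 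I expect the only real difficulty to be bookkeeping — keeping the block indices and the conjugate transposes of the blocks $T_{ij}$ straight, and invoking the Hermitian symmetry of $Q$ at the right moment; the defining formula $T_{ij}=Q_{ij}/(\lambda_j-\lambda_i)$ is engineered precisely so that these off-diagonal terms cancel.
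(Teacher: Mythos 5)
Your proposal is correct and follows essentially the same route as the paper's proof: the same expansion $\hat U^*\hat P\hat U = D + (Q + T^*D + DT)\varepsilon$, the same blockwise cancellation of the off-diagonal dual-part blocks via $Q_{ij}=Q_{ji}^*$ and the choice $T_{ij}=Q_{ij}/(\lambda_j-\lambda_i)$, and the same skew-Hermitian observation $T+T^*=O$ for unitarity. Your write-up is in fact slightly more complete, since the paper checks only $\hat U^*\hat U=\hat I$ and does not spell out the final step that $\hat U\hat\Sigma\hat U^*=\hat P$ is the eigendecomposition.
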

\begin{proof}
Clearly, it holds
\begin{equation*}
\hat{ U}^*\hat{ P}\hat{ U}=  V^* U^* P_1 U V+( V^* U^* P_2 U V+ T^* V^* U^* P_1 U V+ V^* U^* P_1 U V T)\varepsilon.
\end{equation*}
Then the standard part of $\hat{ U}^*\hat{ P}\hat{ U}$ is
\begin{align*}
 V^* U^* P_1 U V
&= V^*\operatorname{diag}\left ( \lambda _1 I_{n_1},\lambda _2 I_{n_2},\cdots, \lambda _t I_{n_t} \right ) V
\\&=\operatorname{diag}\left (\lambda _1  U^*_1 U_1,\lambda
_2  U^*_2 U_2,\cdots,\lambda _t  U^*_t U_t\right )
\\&=\operatorname{diag}\left ( \lambda _1 I_{n_1},\lambda _2 I_{n_2},\cdots, \lambda _t I_{n_t} \right ) ,
\end{align*}
and the dual part of $\hat{ U}^*\hat{ P}\hat{ U}$ is
\begin{align*}
 V^* U^* P_2 U V+ T^* V^* U^* P_1 U V+ V^* U^* P_1 U V T
=(  Q_{ij}+\lambda _j T^*_{ji}+\lambda _i T_{ij}).
\end{align*}
For $i=j$, we have
$$ Q_{ii}+\lambda _i T^*_{ii}+\lambda _i T_{ii}= Q_{ii}= U_i^* P_{ii} U_i=\operatorname{diag}\left ( \mu^i _1,\mu^i _2,\cdots, \mu^i _{n_i}\right ).$$
For $i\ne j$, it follows from $ Q_{ij}= Q^*_{ji}$ that
\begin{align*}
 Q_{ij}+\lambda _j T^*_{ji}+\lambda _i T_{ij}
&= Q_{ij}+\lambda _j Q^*_{ji}/\left ( \lambda _i -\lambda _j  \right ) +\lambda _i Q_{ij}/\left ( \lambda _j -\lambda _i  \right )
\\&= Q_{ij}+\lambda _j Q_{ij}/\left ( \lambda _i -\lambda _j  \right ) +\lambda _i Q_{ij}/\left ( \lambda _j -\lambda _i  \right ) =O.
\end{align*}
This implies that
$$\hat{ U}^*\hat{ P}\hat{ U}=\operatorname{diag}\left ( \lambda_1+\mu^1_1\varepsilon , \cdots,\lambda_1+\mu^1_{n_1}\varepsilon ,\cdots, \lambda_t+\mu^t_1\varepsilon , \cdots,\lambda_t+\mu^t_{n_t}\varepsilon  \right ).$$ Since $ T_{ii}=O$, and for $i\ne j$, it holds that $$ T_{ij}+ T^*_{ji}= Q_{ij}/( \lambda _j -\lambda _i) + Q^*_{ji}/( \lambda _i -\lambda _j)=O,$$ we have $$\hat{ U}^*\hat{ U}=( I+  T^*\varepsilon) V^* U^* U V( I+ T\varepsilon)= I+( T+ T^*)\varepsilon=\hat{I}.$$
This implies that $\hat{U}$ is a unitary  dual complex matrix.
\qed\end{proof}

Corollary \ref{corollary3.1} shows that each eigenvector of $\hat{\mathbf Q}\in\hat{\mathbb{H}}^n$ corresponds to two orthogonal eigenvectors of its dual complex adjoint matrix $\hat{P}=\mathcal{F}(\hat{\mathbf Q})$. Let T denote an eigenvector set of $\hat{P}$. Then $\mathcal{F}^{-1}(T)$ forms an eigenvector set of $\hat{\mathbf Q}$. However, to obtain an orthogonal eigenvector set of $\hat{\mathbf Q}$ from $\mathcal{F}^{-1}(T)$, it may be necessary to remove redundant eigenvectors within $\mathcal{F}^{-1}(T)$. To this end, we propose Algorithm \ref{algorithm3.4.1}, which extracts an orthogonal set of eigenvectors corresponding to a given eigenvalue $\hat{\lambda}$ from a possibly redundant eigenvector set of $\hat{\mathbf{Q}}$.

\begin{algorithm}
    \caption{Dual Quaternion Eigenvectors Orthogonalization}\label{algorithm3.4.1}
    \begin{algorithmic}
    \REQUIRE An eigenvector set $\left \{\hat{\mathbf v}_i \right \}^{m}_{i=1}$ of $\hat{\mathbf Q}\in \hat{\mathbb H}^n$ with respect to eigenvalue $\hat{\lambda}$.
    \ENSURE $\left \{ \hat{\mathbf u}_k \right \}_{k=1}^{t}$.
    \STATE Let $t=1$ and compute $\hat{\mathbf u}_1=\frac{\hat{\mathbf v}_1}{\left \| \hat{\mathbf v}_1 \right \|_2}$.
    \FOR{j~=~2~:~m}
    \STATE Compute $\hat{\mathbf w}=\hat{\mathbf v}_j-\sum_{i=1}^{t}\hat{\mathbf u}_i\hat{\mathbf u}_i^*\hat{\mathbf  v}_j$. Denote $\|\hat{\mathbf w}\|_2=w_1+w_2\varepsilon$.
    \IF{$w_1>0$}
    \STATE Let $t=t+1$, compute $\hat{\mathbf u}_t=\frac{\hat{\mathbf w}}{\left \| \hat{\mathbf w} \right \|_2}$.
    \ENDIF
    \ENDFOR
    \end{algorithmic}
\end{algorithm}

We prove that Algorithm \ref{algorithm3.4.1} outputs an orthogonal eigenvector set of $\hat{\mathbf Q}$ with respect to eigenvalue $\hat{\lambda}$.

\begin{lemma}\label{lemma3.4}
Suppose that $\left \{ \hat{\mathbf v}_k \right \}_{k=1}^{m}\subseteq \hat{\mathbb{Q}}^{n\times 1}$ is an eigenvector set of $\hat{\mathbf Q}\in \hat{\mathbb H}^n$ with respect to eigenvalue $\hat{\lambda}$, then Algorithm \ref{algorithm3.4.1} outputs an orthogonal eigenvector set $\left \{\hat{\mathbf u}_i \right \}^{t}_{i=1}$ of $\hat{\mathbf Q}$ with respect to eigenvalue $\hat{\lambda}$.
\end{lemma}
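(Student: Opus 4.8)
The plan is to argue by induction on the loop counter $j$ in Algorithm \ref{algorithm3.4.1}, carrying the loop invariant that after the $j$-th pass the current list $\hat{\mathbf u}_1,\ldots,\hat{\mathbf u}_t$ is an orthonormal system of appreciable vectors (so $\hat{\mathbf u}_i^*\hat{\mathbf u}_k=\hat{0}$ for $i\ne k$ and $\hat{\mathbf u}_i^*\hat{\mathbf u}_i=\hat{1}$), each of which is an eigenvector of $\hat{\mathbf Q}$ associated with $\hat{\lambda}$. Throughout I would use that $\hat{\lambda}\in\mathbb D$ is central, so ``eigenvector'' may be read interchangeably as $\hat{\mathbf Q}\hat{\mathbf u}_i=\hat{\lambda}\hat{\mathbf u}_i=\hat{\mathbf u}_i\hat{\lambda}$; the conclusion of Lemma \ref{lemma3.4} is exactly this invariant at $j=m$, weakened from orthonormal to orthogonal.

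For the base case (initialization) I would note that $\hat{\mathbf v}_1$, being an eigenvector, is appreciable by Definition \ref{def:righteig}, hence $\|\hat{\mathbf v}_1\|_2$ is an appreciable dual number, the normalization $\hat{\mathbf u}_1=\hat{\mathbf v}_1/\|\hat{\mathbf v}_1\|_2$ is well defined, $\|\hat{\mathbf u}_1\|_2=1$, and $\hat{\mathbf u}_1^*\hat{\mathbf u}_1=\hat{1}$; since $\hat{\mathbf u}_1$ is a central scalar multiple of $\hat{\mathbf v}_1$, the relation $\hat{\mathbf Q}\hat{\mathbf v}_1=\hat{\mathbf v}_1\hat{\lambda}$ propagates to $\hat{\mathbf Q}\hat{\mathbf u}_1=\hat{\lambda}\hat{\mathbf u}_1$.

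For the inductive step I would fix the current list $\hat{\mathbf u}_1,\ldots,\hat{\mathbf u}_t$, set $\hat{\mathbf w}=\hat{\mathbf v}_j-\sum_{i=1}^t\hat{\mathbf u}_i\hat{\mathbf u}_i^*\hat{\mathbf v}_j$, and establish two facts. First, $\hat{\mathbf Q}\hat{\mathbf w}=\hat{\lambda}\hat{\mathbf w}$: this drops out of $\hat{\mathbf Q}\hat{\mathbf v}_j=\hat{\lambda}\hat{\mathbf v}_j$ together with $\hat{\mathbf Q}(\hat{\mathbf u}_i\hat{\mathbf u}_i^*\hat{\mathbf v}_j)=(\hat{\mathbf Q}\hat{\mathbf u}_i)\hat{\mathbf u}_i^*\hat{\mathbf v}_j=\hat{\lambda}\,\hat{\mathbf u}_i\hat{\mathbf u}_i^*\hat{\mathbf v}_j$, using the inductive hypothesis and centrality of $\hat{\lambda}$. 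Second, $\hat{\mathbf u}_k^*\hat{\mathbf w}=\hat{0}$ for all $k\le t$: the orthonormality of the current list collapses the sum to a single term, giving $\hat{\mathbf u}_k^*\hat{\mathbf w}=\hat{\mathbf u}_k^*\hat{\mathbf v}_j-\hat{\mathbf u}_k^*\hat{\mathbf v}_j=\hat{0}$. Then I would split on the test $w_1>0$, where $\|\hat{\mathbf w}\|_2=w_1+w_2\varepsilon$: if $w_1=0$ the list is unchanged and the invariant survives trivially; if $w_1>0$ then $\tilde{\mathbf w}_{st}\neq\tilde{\mathbf O}$, so $\hat{\mathbf w}$ is appreciable, $\|\hat{\mathbf w}\|_2=\sqrt{\hat{\mathbf w}^*\hat{\mathbf w}}$ is an appreciable dual number, and $\hat{\mathbf u}_{t+1}=\hat{\mathbf w}/\|\hat{\mathbf w}\|_2$ is well defined with unit $2$-norm; being a central scalar multiple of $\hat{\mathbf w}$, it inherits $\hat{\mathbf Q}\hat{\mathbf u}_{t+1}=\hat{\lambda}\hat{\mathbf u}_{t+1}$, $\hat{\mathbf u}_{t+1}^*\hat{\mathbf u}_{t+1}=\hat{1}$, and $\hat{\mathbf u}_k^*\hat{\mathbf u}_{t+1}=\hat{0}=\hat{\mathbf u}_{t+1}^*\hat{\mathbf u}_k$ for $k\le t$, so the extended list again satisfies the invariant. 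Taking $j=m$ finishes the argument.

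The step I expect to require the most care — though it is not deep — is the well-definedness of the two normalizations: division by a dual number is legitimate only when the denominator is appreciable, which is precisely why eigenvectors must be appreciable in Definition \ref{def:righteig} (guaranteeing $\|\hat{\mathbf v}_1\|_2$ has nonzero standard part) and why the guard $w_1>0$ in Algorithm \ref{algorithm3.4.1} is exactly the condition ensuring $\|\hat{\mathbf w}\|_2$ is appreciable before we divide. Everything else is the classical Gram--Schmidt orthogonalization transported verbatim to the dual quaternion setting, the transport being harmless because $\hat{\lambda}$ is a dual number, hence central, so left and right multiplication by $\hat{\lambda}$ pass freely through the quaternion coefficients.
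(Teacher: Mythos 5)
Your proposal is correct and follows essentially the same route as the paper's proof: an induction maintaining the Gram--Schmidt invariant, with the projection sum collapsing by orthonormality and the new vector remaining an eigenvector because the dual number $\hat{\lambda}$ commutes with dual quaternions. Your extra care about appreciability of the denominators (and the role of the guard $w_1>0$) is a welcome refinement of a point the paper leaves implicit, but it does not change the argument.
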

\begin{proof}
We prove by induction. It is obvious when $t=1$.
Assume that when $t<s$, it holds $\hat{\mathbf u}^*_k\hat{\mathbf u}_l=\hat{0}$ for integers $k,l$ satisfying $1\le k,l\le t$ and $k\ne l$, and it holds $\hat{\mathbf Q}\hat{\mathbf u}_k=\hat{\mathbf u}_k\hat{\lambda}$ for $k=1,2,\ldots,t$.

When $t=s$, there exists integer $m(t)$ such that
$\hat{\mathbf u}_t=(\hat{\mathbf v}_{m(t)}-\sum_{i=1}^{t-1}
\hat{\mathbf u}_i\hat{\mathbf u}_i^*
\hat{\mathbf v}_{m(t)})/\hat{c}_t$, where $\hat{c}_t= \| \hat{\mathbf v}_{m(t)}-\sum_{i=1}^{t-1}
\hat{\mathbf u}_i\hat{\mathbf u}_i^*
\hat{\mathbf v}_{m(t)} \|_2\in \mathbb{D}$.
Therefore, based on induction assumption, for $k<s$, it holds
\begin{align*}
\hat{\mathbf u}^*_k\hat{\mathbf u}_t &=\frac{1}{\hat{c}_t}\hat{\mathbf u}^*_k\hat{\mathbf v}_{m(t)}-\frac{1}{\hat{c}_t}\sum_{i=1}^{t-1}\hat{\mathbf u}^*_k\hat{\mathbf u}_i\hat{\mathbf u}_i^*\hat{\mathbf v}_{m(t)}
\\&=\frac{1}{\hat{c}_t}\left(\hat{\mathbf u}^*_k\hat{\mathbf v}_{m(t)}-\hat{\mathbf u}^*_k
\hat{\mathbf u}_k\hat{\mathbf u}_k^*\hat{\mathbf v}_{m(t)}\right)
\\&=\frac{1}{\hat{c}_t}\left(\hat{\mathbf u}^*_k\hat{\mathbf v}_{m(t)}-\hat{\mathbf u}_k^*\hat{\mathbf v}_{m(t)}\right)
=\hat{0}.
\end{align*}
Similarly we have $\hat{\mathbf u}^*_t\hat{\mathbf u}_k=\hat{0}$.
Note that 
\begin{align*}
\hat{\mathbf Q}\hat{\mathbf u}_t&=\hat{\mathbf Q}\hat{\mathbf v}_{m(t)}\frac{1}{\hat{c}_t}-\hat{\mathbf Q}\sum_{i=1}^{t-1}\hat{\mathbf u}_i\hat{\mathbf u}_i^*
\hat{\mathbf v}_{m(t)}\frac{1}{\hat{c}_t}
\\&=\hat{\mathbf v}_{m(t)}\hat{\lambda}\frac{1}{\hat{c}_t}-\sum_{i=1}^{t-1}\hat{\mathbf u}_i\hat{\lambda}\hat{\mathbf u}_i^*\hat{\mathbf v}_{m(t)}\frac{1}{\hat{c}_t}
\\&=\frac{1}{\hat{c}_t}(\hat{\mathbf v}_{m(t)}-\sum_{i=1}^{t-1}\hat{\mathbf u}_i\hat{\mathbf u}_i^*\hat{\mathbf v}_{m(t)})\hat{\lambda}
=\hat{\mathbf u}_t\hat{\lambda},
\end{align*}
where the second to last equality holds because the eigenvalue $\hat{\lambda}$ is a dual number and it is commutative with dual quaternions. Therefore, $\hat{\mathbf u}_t$ is also an eigenvector of $\hat{\mathbf Q}$ with respect to the eigenvalue $\hat{\lambda}$.
\qed\end{proof}

By Corollary \ref{corollary3.1}, we can obtain all eigenpairs of a dual quaternion Hermitian matrix by computing all eigenpairs of the dual complex adjoint matrix. Based on Theorem \ref{theorem3.3} and Lemma \ref{lemma3.4}, we present a method for calculating all eigenpairs of dual quaternion Hermitian matrices (see Algorithm \ref{algorithm3.4}), with a proof provided in Theorem \ref{theorem3.4}.

\begin{algorithm}
    \caption{EDDCAM-EA} \label{algorithm3.4}
    \begin{algorithmic}
    \REQUIRE $\hat{\mathbf Q}\in \hat{\mathbb H}^n$.
    \ENSURE $\left \{\hat{\mathbf w}^i_j \right \}^{}_{i,j}$ and corresponding eigenvalues.
    \STATE Compute $\hat{P}=\mathcal{J}(\hat{\mathbf Q})$.
    \STATE Compute $\hat{U}$ by Theorem \ref{theorem3.3} such that $\hat{U}^*\hat{P}\hat{U}=\operatorname{diag}\left (\hat{\lambda}_1\hat{  I}_{t_1},\hat{\lambda}_2\hat{  I}_{t_2},\cdots,\hat{\lambda}_{l}\hat{ I}_{t_l}\right )$. Denote $\hat{U}=[ \hat{U}_1,\hat{U}_2,\cdots,\hat{U}_{l} ]$, where $\hat{U}_{i}= [ \hat{\mathbf u}^i_1,\hat{\mathbf u}^i_2,\cdots,\hat{\mathbf u}^i_{t_i}]\in \mathbb{DC}^{2n\times t_i}$.
    \STATE Compute $\hat{\mathbf v}^i_j=\mathcal{F}^{-1}(\hat{\mathbf u}^i_j),i=1,2,\ldots,l,j=1,2,\ldots,t_i$.
    \STATE Apply Algorithm \ref{algorithm3.4.1} to $\left \{\hat{\mathbf v}^i_j \right \}^{t_i}_{j=1}$ and get orthogonal eigenvectors $\left \{\hat{\mathbf w}^i_j \right \}^{m_i}_{j=1}$.
    \end{algorithmic}
\end{algorithm}

\begin{theorem}\label{theorem3.4}
Let $\hat{\mathbf Q}\in\hat{\mathbb{H}}^n$ and $\hat{P}=\mathcal{J}(\hat{\mathbf Q})$. Suppose that $\hat{U}$ is a unitary dual complex matrix such that
$\hat{U}^*\hat{P}\hat{U}=\operatorname{diag}\left (\hat{\lambda}_1\hat{I}_{t_1},\hat{\lambda}_2\hat{  I}_{t_2},\cdots,\hat{\lambda}_{l}\hat{ I}_{t_l}\right ).$
Denote $\hat{U}=[ \hat{U}_1,\hat{U}_2,\cdots,\hat{U}_{l}]$, where $\hat{U}_{i}= [ \hat{\mathbf u}^i_1,\hat{\mathbf u}^i_2,\cdots,\hat{\mathbf u}^i_{t_i}]\in \mathbb{DC}^{2n\times t_i}$.
Let $\hat{\mathbf v}^i_j=\mathcal{F}^{-1}(\hat{\mathbf u}^i_j),i=1,2,\ldots,l,j=1,2,\ldots,t_i$.
Then applying Algorithm \ref{algorithm3.4.1} to
$\left \{\hat{\mathbf v}^i_j \right \}^{t_i}_{j=1}$ will obtain a set of orthogonal eigenvectors $\left \{\hat{\mathbf w}^i_j \right \}^{m_i}_{j=1}$ of $\hat{\mathbf Q}$ with respect to eigenvalue $\hat{\lambda}_i$, and $\left \{\hat{\mathbf w}^i_j \right \}^{}_{i,j}$ is a complete eigenvector set of $\hat{\mathbf Q}$.
\end{theorem}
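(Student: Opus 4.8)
The plan is to read the statement off from three facts already available: the eigenvalue dictionary of Corollary~\ref{corollary3.1}, the correctness of Algorithm~\ref{algorithm3.4.1} proved in Lemma~\ref{lemma3.4}, and the unitarity of the matrix $\hat U$ delivered by Theorem~\ref{theorem3.3}. For the per-eigenvalue assertion: since $\hat U$ is a unitary dual complex matrix with $\hat U^{*}\hat P\hat U$ diagonal, its columns $\hat{\mathbf u}^i_j$ are a complete orthonormal family of eigenvectors of $\hat P=\mathcal J(\hat{\mathbf Q})$, with $\hat{\mathbf u}^i_j$ belonging to $\hat\lambda_i$. By the converse part of Corollary~\ref{corollary3.1} each $\hat{\mathbf v}^i_j=\mathcal F^{-1}(\hat{\mathbf u}^i_j)$ is then an eigenvector of $\hat{\mathbf Q}$ for $\hat\lambda_i$, so $\{\hat{\mathbf v}^i_j\}_{j=1}^{t_i}$ is an eigenvector set of $\hat{\mathbf Q}$ with respect to $\hat\lambda_i$; applying Algorithm~\ref{algorithm3.4.1} to it and invoking Lemma~\ref{lemma3.4} gives at once that the output $\{\hat{\mathbf w}^i_j\}_{j=1}^{m_i}$ is an orthogonal eigenvector set of $\hat{\mathbf Q}$ for $\hat\lambda_i$.

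It then remains to check that $\{\hat{\mathbf w}^i_j\}_{i,j}$ is orthonormal and has exactly $n$ members. Orthonormality within a block is Lemma~\ref{lemma3.4}. Across two blocks $i\ne i'$, unitarity of $\hat U$ gives $(\hat{\mathbf u}^i_j)^{*}\hat{\mathbf u}^{i'}_{j'}=\hat 0$, and since the $\hat\lambda_i$- and $\hat\lambda_{i'}$-eigenspaces of $\hat P$ are mutually orthogonal while $\mathcal H$ keeps the $\hat\lambda_{i'}$-eigenspace inside itself (the latter because $\hat\lambda_{i'}$ is a dual number, so the eigenspace $F_{i'}$ of $\hat{\mathbf Q}$ is stable under right multiplication by $\jj$), also $(\hat{\mathbf u}^i_j)^{*}\mathcal H(\hat{\mathbf u}^{i'}_{j'})=\hat 0$; inserting both into $\mathcal J(\hat{\mathbf v})^{*}\mathcal J(\hat{\mathbf v}')=\mathcal J(\hat{\mathbf v}^{*}\hat{\mathbf v}')$ with $\mathcal J(\hat{\mathbf v})=[\mathcal F(\hat{\mathbf v})~-\mathcal H(\mathcal F(\hat{\mathbf v}))]$ forces $(\hat{\mathbf v}^i_j)^{*}\hat{\mathbf v}^{i'}_{j'}=\hat 0$, and as each $\hat{\mathbf w}^i_j$ is a right $\hat{\mathbb Q}$-linear combination of $\hat{\mathbf v}^i_1,\dots,\hat{\mathbf v}^i_{t_i}$, the cross terms $(\hat{\mathbf w}^i_j)^{*}\hat{\mathbf w}^{i'}_{j'}$ ($i\ne i'$) vanish. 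For the cardinality, Corollary~\ref{corollary3.1} shows every eigenvalue of $\hat{\mathbf Q}$ appears in $\hat P$ with doubled multiplicity, so $t_i=2\tilde m_i$ with $\tilde m_i$ the multiplicity of $\hat\lambda_i$ in $\hat{\mathbf Q}$ and $\sum_i\tilde m_i=n$; and since $\mathcal F^{-1}$ sends the $\hat\lambda_i$-eigenspace of $\hat P$ onto $F_i$, the family $\{\hat{\mathbf v}^i_j\}_{j=1}^{t_i}$ generates $F_i$, so its orthonormalization by Algorithm~\ref{algorithm3.4.1} produces exactly $m_i=\tilde m_i$ vectors. Hence $\{\hat{\mathbf w}^i_j\}_{i,j}$ is an orthonormal system of $\sum_i\tilde m_i=n$ eigenvectors of $\hat{\mathbf Q}$, i.e., a complete eigenvector set.

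The step I expect to be the real obstacle is the equality $m_i=\tilde m_i$. Algorithm~\ref{algorithm3.4.1} discards a residual the moment its standard part vanishes, so one must exclude the possibility that $\{\hat{\mathbf v}^i_j\}_{j=1}^{t_i}$ harbours a purely infinitesimal element of $F_i$ unreachable from the accepted vectors --- equivalently, that the standard parts of the $\hat{\mathbf v}^i_j$ already generate, over $\hat{\mathbb Q}$, a submodule of the full rank $\tilde m_i$. This is exactly where the special shape of $\hat U$ from Theorem~\ref{theorem3.3} enters: its standard part is the honestly unitary complex matrix $UV$, so the standard parts $(\hat{\mathbf u}^i_j)_{st}$ are orthonormal over $\mathbb C$ and exhaust the standard part of the $\hat\lambda_i$-eigenspace of $\hat P$, which is stable under $\mathcal H$ and therefore pulls back through $\mathcal F^{-1}$ to a rank-$\tilde m_i$ $\hat{\mathbb Q}$-module. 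The remaining points --- that $\mathcal F$ intertwines the two Hermitian forms, and that a finite Gram--Schmidt over a Hermitian $\hat{\mathbb Q}$-module terminates at a basis of the module it generates --- are routine.
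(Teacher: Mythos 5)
Your proposal is correct and follows essentially the same route as the paper: Corollary~\ref{corollary3.1} plus Lemma~\ref{lemma3.4} handle the per-eigenvalue claim, and the count $m_i=t_i/2$ is forced by the fact that the standard parts of the columns of $\hat U_i$ are $t_i$ orthonormal complex vectors, so the quaternionic rank of $\{\tilde{\mathbf v}^i_j\}$ cannot drop below $t_i/2$. The only cosmetic difference is that the paper runs this last step as a contradiction via an explicit rank computation on $\mathcal J(\tilde{\mathbf V})=\mathcal J(\tilde{\mathbf W})\mathcal J(\tilde{\mathbf A})$, whereas you state the dimension count directly; the content is identical.
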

\begin{proof}
Theorem \ref{theorem3.3} provides a method for calculating the eigen-decomposition of dual complex Hermitian matrices. Consequently, $\hat{U}$ can be obtained using Theorem \ref{theorem3.3}. According to Corollary \ref{corollary3.1}, the eigenvalues of $\hat{P}$ always have even multiplicity, and each eigenvalue $\hat{\lambda}i$ of $\hat{\mathbf{Q}}$ has multiplicity $\frac{t_i}{2}$ for $i = 1, 2, \dots, l$. It follows from Corollary \ref{corollary3.1} and Lemma \ref{lemma3.4} that $\left \{\hat{\mathbf w}^i_j \right \}^{m_i}_{j=1}$ forms an orthogonal set of eigenvectors of $\hat{\mathbf Q}$ corresponding to the eigenvalue $\hat{\lambda}_i$. Thus, it remains to show that $m_i=\frac{t_i}{2}$.

We prove by contradiction that we assume that $m_i<\frac{t_i}{2}$.
Let $\tilde{\mathbf{w}}^i_j$ and $\tilde{\mathbf{v}}^i_j$ denote the standard parts of $\hat{\mathbf{w}}^i_j$ and $\hat{\mathbf{v}}^i_j$, respectively. 
According to Algorithm \ref{algorithm3.4.1},
the vector in $\left \{\tilde{\mathbf v}^i_j \right \}^{t_i}_{j=1}$ can be expressed as a linear combination of the vectors in $\left \{\tilde{\mathbf w}^i_j \right \}^{m_i}_{j=1}$. That is, there exist quaternions $\tilde{\alpha}_{jh},j=1,2,\ldots,t_i,h=1,2,\ldots,m_i$ such that $\tilde{\mathbf v}^i_j=\sum_{h=1}^{m_i}\tilde{\mathbf w}^i_h\tilde{\alpha}_{jh}$.
Denote $\tilde{\mathbf A}=(\tilde{\alpha}_{hj})$, $\tilde{\mathbf V}=[\tilde{\mathbf v}^i_1,\tilde{\mathbf v}^i_2,\cdots,\tilde{\mathbf v}^i_{t_i}]$ and $\tilde{\mathbf W}=[\tilde{\mathbf w}^i_1,\tilde{\mathbf w}^i_2,\cdots,\tilde{\mathbf w}^i_{m_i}]$, then
$\tilde{\mathbf V}=\tilde{\mathbf W}\tilde{\mathbf A}$.
It follows from (ii) in Lemma \ref{lemma3.2} that
$J(\tilde{\mathbf V})=J(\tilde{\mathbf W})J(\tilde{\mathbf A})$. Since $\left \{\tilde{\mathbf w}^i_j\right \}^{m_i}_{j=1}$ are orthogonal, then $\tilde{\mathbf W}^*\tilde{\mathbf W}=\tilde{\mathbf I}$. It follows from
(i), (ii) and (iv) in Lemma \ref{lemma3.2} that $J(\tilde{\mathbf W})^*J(\tilde{\mathbf W})=I_{2m_i}$, then the rank of complex matrix $J(\tilde{\mathbf W})$ is $2m_i$. Since $J(\tilde{\mathbf V})=J(\tilde{\mathbf W})J(\tilde{\mathbf A})$, then the rank of $J(\tilde{\mathbf V})$ is at most $2m_i<t_i$. Suppose that $\tilde{\mathbf V}= V_1+ V_2 \jj$, then $J(\tilde{\mathbf V})=\begin{bmatrix}
V_1  & V_2\\
-\overline{V_2}  &\overline{V_1}
\end{bmatrix}$.
Since the standard part of $\hat{U_i}$ is
$\begin{bmatrix}
V_1\\
-\overline{V_2}
\end{bmatrix}$, and the columns of $\hat{U_i}$ are orthogonal, then
$$\operatorname{rank}(J(\tilde{\mathbf V}))
=\operatorname{rank}\left(\begin{bmatrix}
V_1  & V_2\\
-\overline{V_2}  &\overline{V_1}
\end{bmatrix}\right)
\ge \operatorname{rank}\left(\begin{bmatrix}
V_1\\
-\overline{V_2}
\end{bmatrix}\right)=t_i.$$
Then the rank of $J(\tilde{\mathbf V})$ is at least $t_i$, which is a contradiction.
\qed\end{proof}

\section{Experimental Results}\label{section6}
This section presents the experimental results for the algorithms developed in the previous sections. All numerical experiments are conducted in MATLAB (2022a) on a laptop of 8G of memory and Inter Core i5 2.3Ghz CPU.

\subsection{Performance of Aitken Extrapolation}
We begin by evaluating the effect of Aitken extrapolation on the power method. A random dual quaternion Hermitian matrix $\hat{\mathbf{Q}}$ of dimension $n=20$ is generated, and the DCAM-PM and ADCAM-PM algorithms are employed to compute the strict dominant eigenvalue and corresponding eigenvector. To enable a direct comparison between the original iteration sequence and the sequence with Aitken extrapolation applied, the extrapolation is used from the outset. The results are presented in Fig. \ref{fig:testfig}. For reference, the strictly dominant eigenvalue $\lambda$ is precomputed with high accuracy using the DCAM-PM algorithm. Fig. \ref{fig:testfig} (a) shows the convergence of the eigenvalue, where $$|\lambda^{(n)}-\lambda|_{F^R}=\sqrt{|\lambda^{(n)}_{st}-\lambda_{st}|^2+|\lambda^{(n)}_{\mathcal I}-\lambda_{\mathcal I}|^2}.$$
Fig. \ref{fig:testfig} (b) shows the convergence of the error $e_{\lambda}$, where $e_\lambda=\|\hat{\mathbf{Q}}\hat{\mathbf{u}}-\lambda\hat{\mathbf{u}}\|_{2^R}$, $\hat{\mathbf{u}}$ is the eigenvector computed by DCAM-PM or ADCAM-PM.

The results in Fig. \ref{fig:testfig} demonstrate that Aitken Extrapolation effectively enhances the convergence rate of the power method.

\begin{figure}[!t]
    \centering
    \subfigure[]{
        \begin{minipage}[t]{0.48\linewidth}
            \centering
            \includegraphics[width=0.9\linewidth]{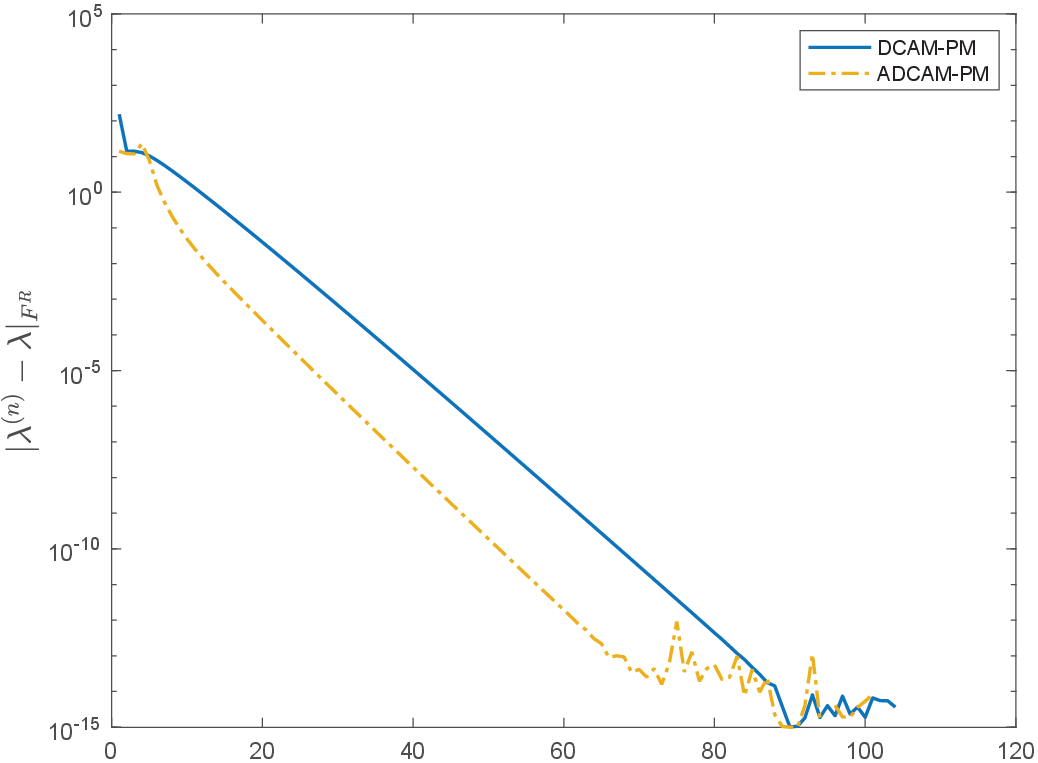}
        \end{minipage}
    }%
    \subfigure[]{
        \begin{minipage}[t]{0.48\linewidth}
            \centering
            \includegraphics[width=0.9\linewidth]{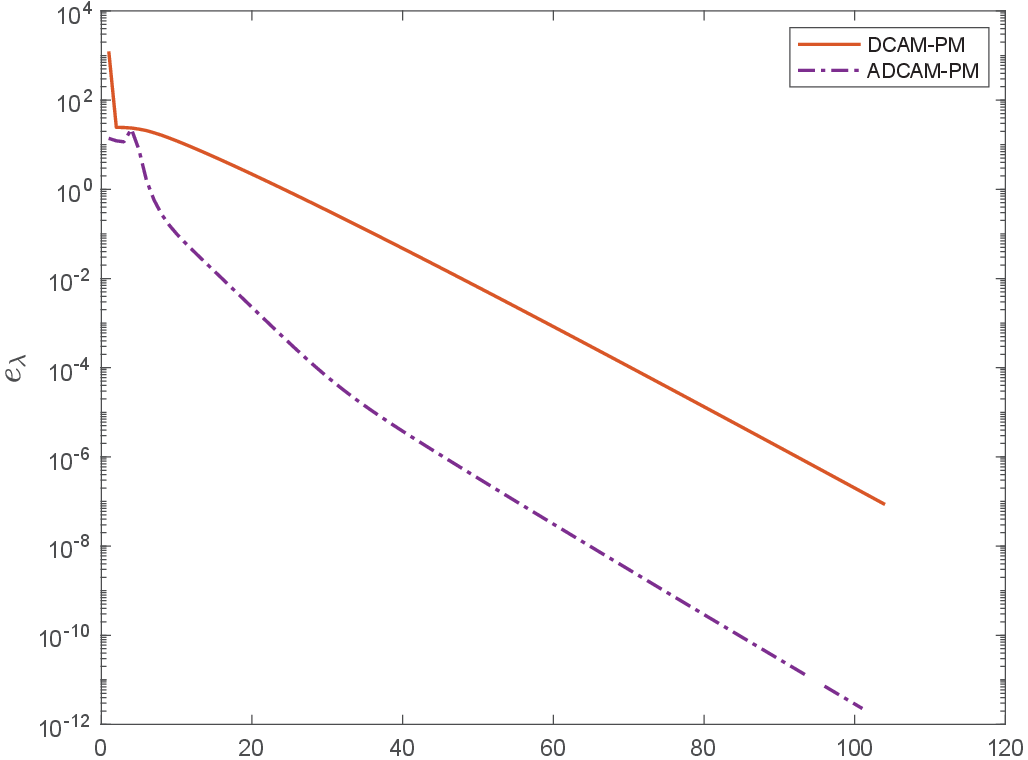}
        \end{minipage}
    }%
    \centering
    \caption{\small{Performance of Aitken extrapolation: (a) Convergence of eigenvalue;  (b) Convergence of error $e_{\lambda}$.}}
   \label{fig:testfig}
\end{figure}

Next, we compare the performance of the DCAM-PM and ADCAM-PM algorithms in solving the strict dominant eigenvalues of dual quaternion Hermitian matrices of varying dimensions. In the DCAM-PM algorithm, the parameter $\delta$ is set to $10^{-6}$, while in the ADCAM-PM algorithm, the parameter $\gamma$ is set to $10^{-3}$, and $\delta$ remains $10^{-6}$. The experimental results are summarized in Table \ref{table0},  where \textit{time(s)} denotes the average CPU time (in seconds) required to compute the strict dominant eigenvalues, and $n_{iter}$ represents the average number of iterations. All experiments were repeated 100 times, and the reported values are averages. The results indicate that Aitken extrapolation effectively reduces both the iteration count and computation time.

\begin{table}[htbp]
  \centering
  \caption{Performance of Aitken Extrapolation}
  \begin{tabular}{ccccccc}
    \hline
    \multirow{2}{*}{$n$} & \multicolumn{3}{c}{DCAM-PM} & \multicolumn{3}{c}{ADCAM-PM}\\
    \multicolumn{1}{c}{}& $e_\lambda$ & $n_{iter}$ & $time (s)$ & $e_\lambda$ & $n_{iter}$ & $time (s)$\\
    \hline
    10& 9.15e-7& 437.59 & 3.93e-3 & 2.35e-7 & 300.17 & 2.49e-3\\
    50& 8.24e-7& 48.06 & 1.77e-3&  5.32e-7& 33.35& 1.75e-3\\
    100& 7.78e-7& 38.94 & 3.37e-3& 4.01e-7& 25.59& 2.49e-3\\
    500& 7.51e-7&  31.80 & 7.88e-2& 2.53e-7& 20.95 & 5.55e-2\\
    1000& 7.21e-7& 31.65 & 3.50e-1& 1.56e-7& 21.18& 2.83e-1\\
    \hline
  \end{tabular}
  \label{table0}
\end{table}

\subsection{Application in Multi-Agent Formation Control}
The Laplacian matrix of the mutual visibility graph plays a pivotal role in multi-agent formation control \cite{Qi2023}. Notably, this Laplacian matrix is a dual quaternion Hermitian matrix, and computing its eigenvalues is essential for analyzing the dynamics of multi-agent formations. In this section, we evaluate the efficiency of the PM, DCAM-PM, and EDDCAM-EA algorithms by computing the eigenvalues of the Laplacian matrix.

In multi-agent formation control \cite{mac}, interactions between rigid bodies are modeled using an undirected graph $G = (V,E)$, where the vertex set $V$ represents the individual rigid bodies, and an edge $(i, j)\in E$ exists if and only if rigid bodies $i$ and $j$ have mutual visual perception. Consider the graph $G=\left ( V,E\right )$ in multi-agent formation control, where $|V|=n$ and $\hat{\mathbf q}=(\hat{q}_{i})\in \hat{\mathbb{U}}^{n\times 1}$ is a unit dual quaternion vector. The sparsity $s$ of graph $G$ is defined as $s=\frac{2|E|}{n^2}$.  The Laplacian matrix $\hat{\mathbf L}$ for the graph $G$, with respect to $\hat{\mathbf q}$, is given as defined in  \cite{Qi2023}
\begin{equation*}\label{L}
    \hat{\mathbf L}=\hat{\mathbf D}-\hat{\mathbf A},
\end{equation*}
where $\hat{\mathbf D}$ is a real diagonal matrix and its diagonal element is the degree of the corresponding vertex in graph $G$. Denote $\hat{\mathbf A}=(\hat{a}_{ij})$, then
\begin{equation*}
     \hat{a}_{ij}=
    \begin{cases}
    \hat{q}_{i}^{\ast } \hat{q}_{j}, & \text{if $\left ( i,j\right )\in E$},\\
    \hat{0}, & \text{otherwise}.
\end{cases}
\end{equation*}

To evaluate the efficiency of our algorithms, numerical experiments were performed on the graph $G$ with varying sparsity $s$ and dimensions $n=10$ and $n=100$. Let $e_\lambda=\frac{1}{n_0}\sum_{k=1}^{n_0}  \|\hat{\mathbf{L}}\hat{\mathbf{u}}_k-\hat{\lambda}_k\hat{\mathbf{u}}_k\|_{2^R}$, where $\{\hat{\lambda}_k\}_{k=1}^{n_0}$ and $\{\hat{\mathbf{u}}_k\}_{k=1}^{n_0}$ are eigenpairs of $\hat{\mathbf{L}}$ obtained by each algorithm, and $n_0$ is the total number of eigenvalues obtained by each algorithm. The numerical results are summarized in Table \ref{table1}. Here, $e_\lambda$ is used to verify the accuracy of the eigenvalues obtained by the algorithms, \textit{time(s)} represents the average CPU time (in seconds) required by each algorithm to compute eigenvalues. All experiments were repeated ten times with different selections of $\hat{q}$ and $E$.

\begin{table}[htbp]
  \centering
  \caption{Numerical results for eigenvalue problem in multi-agent formation control.}
  \begin{tabular}{cccccccc}
    \hline
    \multirow{2}{*}{$n$} & \multirow{2}{*}{$s$} & \multicolumn{2}{c}{PM}& \multicolumn{2}{c}{DCAMA-PM}&\multicolumn{2}{c}{EDDCAM-EA}\\
    \multicolumn{1}{c}{}&\multicolumn{1}{c}{}
    &$e_\lambda$ & $time (s)$ & $e_\lambda$ & $time (s)$& $e_\lambda$ & $time (s)$\\
    \hline
    10 & 10\% & 5.52e-11 & 1.92e-1  & 4.92e-11 & 6.32e-3 & 1.19e-13 & 1.42e-3\\
    10 & 20\% & 7.30e-11 & 4.87e-1  & 7.26e-11 & 7.75e-3& 2.95e-13 & 1.60e-3\\
    10 & 30\% & 7.75e-11 & 6.78e-1  & 7.84e-11 & 1.49e-2& 7.02e-13 & 1.48e-3\\
    10 & 40\% & 7.92e-11 & 1.20  & 8.01e-11 & 1.42e-2& 1.49e-12 & 1.89e-3\\
    10 & 50\% & 8.35e-11 & 1.40 & 8.11e-11 & 1.85e-2& 1.92e-12 & 1.89e-3  \\
    10 & 60\% & 8.39e-11 & 1.87  & 8.35e-11 & 1.99e-2& 1.37e-12 & 1.34e-3\\
    100 & 5\% & 9.75e-5 & 3.66e+1 & 9.74e-7 & 4.94& 9.73e-11 &  9.64e-2\\
    100 & 8\% & 9.88e-5 & 5.02e+1 & 9.78e-7 & 5.41& 2.12e-10 & 8.46e-2\\
    100 & 10\% & 9.98e-5 & 5.63e+1 & 9.80e-7 & 7.00 & 3.53e-10 &  9.92e-2  \\
    100 & 15\% & 1.10e-4 & 7.16e+1 & 9.82e-7 & 8.28 & 2.05e-10 &  9.13e-2\\
    100 & 18\% & 1.21e-4 & 7.94e+1 & 9.83e-7 & 9.69 & 1.85e-10 &  9.62e-2 \\
    100 & 20\% & 1.09e-4 & 9.00e+1 & 9.83e-7 & 9.42 & 4.51e-10 &  8.52e-2\\
    \hline
  \end{tabular}
  \label{table1}
\end{table}

Table \ref{table1} shows that for $n=10$, the DCAMA-PM algorithm achieves the same level of accuracy as the power method in terms of $e_\lambda$ while requiring less than $4\%$ of the computation time of the power method. For $n=100$, DCAMA-PM outperforms the power method in both accuracy and speed, using only $15\%$ of the computation time. These results demonstrate the effectiveness of dual complex adjoint matrices in assisting the power method in reducing computation time and enhancing accuracy. Notably, the EDDCAM-EA algorithm computes all eigenpairs in under 0.002 seconds for $n=10$ and under 0.1 seconds for $n=100$, with an average $e_{\lambda}<10^{-9}$. This performance significantly outpaces both the power method and DCAMA-PM, highlighting EDDCAM-EA's superior efficiency and accuracy in computing the eigenpairs of Laplacian matrices. 

\subsection{Addressing Power Method Limitations with the EDDCAM-EA Algorithm}\label{subsection6.3}
Numerical experiments indicate that the power method may fail to compute certain eigenvalues of a dual quaternion Hermitian matrix when multiple eigenvalues share the same standard part but differ in their dual parts. In this subsection, we demonstrate that the EDDCAM-EA algorithm effectively overcomes this limitation.

We generate a dual quaternion Hermitian matrix $\hat{\mathbf{P}}=(\hat{p}_{ij})$ as the following way. Let $\hat{\mathbf{q}}=(\hat{q}_i)$ be a random unit dual quaternion vector defined by
$$
\setlength{\arraycolsep}{1.2pt}
\left.\hat{\mathbf{q}}=\left[\begin{array}{rrrr}
   -0.5103 &  -0.2661 &  -0.2632 &  -0.7743\\
    0.2881 &  -0.6705 &  -0.2305 &  -0.6437\\
   -0.1236 &   0.1789 &  -0.7519 &  -0.6223\\
   -0.5605 &  -0.2485 &  -0.6001 &  -0.5138\\
   -0.5946 &  -0.1002 &  -0.2584 &  -0.7547 \end{array}\right.\right]+
\left[\begin{array}{rrrr}
   0.2645  &  -0.4286  &   0.4180  &  -0.1691\\
   -0.3885 &   -0.5378  &   0.2295 &   0.3042\\
   -0.9227 &   -0.9461  &   0.1770 &   -0.3027\\
   -0.2963 &   -0.3621 &   0.6937 & -0.3117\\
   -0.2488  &   0.2520  &   0.0635 &    0.1408
\end{array}\right]\varepsilon.$$
Let $E=\{\{ 1,2 \},\{ 2,3 \},\{ 3,4 \},\{ 4,5 \},\{ 5,1 \}\}$ and set $\hat{p}_{ij}$ as
\begin{equation*}
\hat{p}_{ij}=
\begin{cases}
 \hat{q}_{i}^{\ast } \hat{q}_{j}, & \text{if}~ ~\left \{ i,j \right \} \in E ,\\
i\varepsilon , & \text{if}~ ~ i=j ,\\
0, & \text{otherwise.} \\
\end{cases}
\end{equation*}

According to Theorem 9 in \cite{Cui2023}, the standard parts of the eigenvalues of $\hat{\mathbf{P}}$ are given by $cos(\frac{2j}{5}\pi)$ for $j=1,2,3,4,5$, implying that
$\hat{\mathbf{P}}$ possesses two eigenvalues with identical standard part.

Now, we compute all eigenvalues of $\hat{\mathbf{P}}$ by EDDCAM-EA. It terminates with $e_{\lambda }=3.0590\times 10^{-14}$ and the elapsed CPU time is $0.0118s$ and all five eigenvalues of $\hat{\mathbf{P}}$ are found as follows:
{\footnotesize
\begin{equation}\label{eig1}
2.0000+3.0000\varepsilon,\ 0.6180+3.5257\varepsilon,\ 0.6180+2.4743\varepsilon,\ -1.6180+3.8507\varepsilon,\ -1.6180+2.1493\varepsilon.
\end{equation}}

As shown in \eqref{eig1}, $\hat{\mathbf{P}}$ has two eigenvalues with identical standard parts but different dual parts. In such cases, the power method fails to compute all eigenvalues and does not converge when encountering eigenvalues that share the same standard part but differ in their dual parts. In contrast, the EDDCAM-EA algorithm accurately and efficiently computes all eigenvalues of $\hat{\mathbf{P}}$, demonstrating its clear advantage over the power method.

\section{Conclusions}\label{section7}

This paper investigates the eigenvalue computation problem for dual quaternion Hermitian matrices by transforming it into the computation of eigenpairs for dual complex matrices using the dual complex adjoint matrix. This transformation provides a theoretical foundation for the development of more efficient eigenvalue algorithms. Based on this framework, we introduce a novel variant of the power method, DCAM-PM, which significantly enhances computational efficiency. Furthermore, by integrating Aitken extrapolation, we accelerate convergence, leading to the ADCAM-PM algorithm, which achieves faster convergence than the original power method. To overcome the limitations of the power method, particularly when the matrix has eigenvalues with identical standard parts but different dual parts, we propose the EDDCAM-EA algorithm. EDDCAM-EA not only improves accuracy but also delivers substantial speed gains, effectively addressing scenarios where the power method becomes unreliable. Numerical experiments confirm the advantages of our proposed algorithms, demonstrating that Aitken extrapolation significantly reduces iteration counts and computation time. Our experimental results on eigenvalue computation for Laplacian matrices in multi-agent formation control further highlight the robustness and efficiency of DCAM-PM and EDDCAM-EA. These findings emphasize the potential of our methods for broader applications in computational mathematics and engineering.


%
%
\section*{Conflict of interest}
The authors declare that they have no conflict of interest.

\section*{Data availability}

The authors confirm that the data supporting the findings of this study are available
within the article.

%
\bibliographystyle{spmpsci}      

\begin{thebibliography}{99}
\bibitem{c1} Bultmann, Simon, Kailai Li, and Uwe D. Hanebeck. "Stereo visual SLAM based on unscented dual quaternion filtering." 2019 22th International Conference on Information Fusion (FUSION). IEEE, 2019.
\bibitem{qc1}Bryson, Mitch, and Salah Sukkarieh. "Building a robust implementation of bearing‐only inertial SLAM for a UAV." Journal of Field Robotics 24.1‐2 (2007): 113-143.
871): 381-395.

\bibitem{Cadena2016}Cadena, Cesar, et al. "Past, present, and future of simultaneous localization and mapping: Toward the robust-perception age." IEEE Transactions on robotics 32.6 (2016): 1309-1332.


\bibitem{Cui2023}
Cui, Chunfeng, and Liqun Qi. "A power method for computing the dominant eigenvalue of a dual quaternion Hermitian matrix." Journal of Scientific Computing 100.1 (2024): 21.

\bibitem{Daniilidis1999}
Daniilidis, Konstantinos. "Hand-eye calibration using dual quaternions." The International Journal of Robotics Research 18.3 (1999): 286-298.



\bibitem{Duan2023}Duan S Q, Wang Q W, Duan X F. On Rayleigh quotient iteration for dual quaternion Hermitian eigenvalue problem[J]. Mathematics, 2024, 12(24).


\bibitem{d2}Ling C, He H, Qi L. Singular values of dual quaternion matrices and their low-rank approximations[J]. Numerical Functional Analysis and Optimization, 2022, 43(12): 1423-1458.
 

\bibitem{d3}Ling C, Qi L, Yan H. Minimax principle for eigenvalues of dual quaternion Hermitian matrices and generalized inverses of dual quaternion matrices[J]. Numerical Functional Analysis and Optimization, 2023, 44(13): 1371-1394.


\bibitem{Li2023}Qi L, Luo Z. Eigenvalues and singular values of dual quaternion matrices[J]. Pacific Journal of Optimization, 2023, 19(2):257-272.

\bibitem{Qi2022}Qi L, Ling C, Yan H. Dual quaternions and dual quaternion vectors[J]. Communications on Applied Mathematics and Computation, 2022, 4(4): 1494-1508.




\bibitem{wang2023dual}Wang H, Cui C, Liu X. Dual r-rank decomposition and its applications[J]. Computational and Applied Mathematics, 2023, 42(8): 349.





\bibitem{Zhang1997}Zhang F. Quaternions and matrices of quaternions[J]. Linear algebra and its applications, 1997, 251: 21-57.
\bibitem{Grisetti2010}Grisetti, G., K¨ummerle, R., Stachniss, C., Burgard, W.: A tutorial on graph-based slam.
IEEE Intelligent Transportation Systems Magazine 2(4), 31–43 (2010)



\bibitem{ling2022neumann}Ling C, He H, Qi L, et al. von Neumann type trace inequality for dual quaternion matrices[J]. arXiv preprint arXiv:2204.09214, 2022.

\bibitem{chen2024dual} Chen Y, Zhang L. Dual Complex Adjoint Matrix and Applications to Hand-Eye Calibration and Multi-Agent Formation Control[J]. Journal of Scientific Computing, 2025, 104(1): 1-24.

\bibitem{Cheng2016}Cheng J, Kim J, Jiang Z, et al. Dual quaternion-based graphical SLAM[J]. Robotics and Autonomous Systems, 2016, 77: 15-24.
\bibitem{mac}Lin Z, Wang L, Han Z, et al. Distributed formation control of multi-agent systems using complex Laplacian[J]. IEEE Transactions on Automatic Control, 2014, 59(7): 1765-1777.

\bibitem{wei2024singular}Wei T, Ding W, Wei Y. Singular value decomposition of dual matrices and its application to traveling wave identification in the brain[J]. SIAM Journal on Matrix Analysis and Applications, 2024, 45(1): 634-660.


\bibitem{Wei2013}Wei E, Jin S, Zhang Q, et al. Autonomous navigation of Mars probe using X-ray pulsars: modeling and results[J]. Advances in Space Research, 2013, 51(5): 849-857.
\bibitem{Qi2023}Qi L, Wang X, Luo Z. Dual quaternion matrices in multi-agent formation control[J]. Communications in Mathematical Sciences, 2023, 21(7):1865-1874.

\end{thebibliography}

%
%

\end{document}